\newcommand{\bR}{\mathbb{R}}
\newcommand{\eps}{\epsilon}
\newcommand{\veps}{\varepsilon}
\newcommand{\tvec}{\mathtt{vec}}
\newcommand\nc\newcommand
\nc\PhI{\Phi_I}\nc\del{\delta}
\nc\Om{\Omega}
\newcommand{\nnz}{\mathrm{nnz}}
\newcommand{\tr}{\mathtt{tr}}
\newcommand{\mbf}{\mathbf}\newcommand{\bSigma}{\mbf{\Sigma}}
\newcommand{\bOmega}{\mbf{\Omega}}
\title{\Large Find the dimension that counts: Fast dimension estimation and Krylov PCA}
\author{Shashanka Ubaru\thanks{IBM T. J. Watson Research Center, Yorktown Heights, NY, USA. 
 {\tt Shashanka.Ubaru@ibm.com}.}
\and  Abd-Krim Seghouane\thanks{ 
The University of Melbourne, Melbourne, Victoria, Australia.
 {\tt abd-krim.seghouane@unimelb.edu.au}}
\and Yousef Saad\thanks{ 
University of Minnesota, Twin Cities, MN, USA. 
{\tt saad@umn.edu} }}
\date{}
\begin{document}
\maketitle


    \begin{abstract} {\small\baselineskip=9pt  
    High dimensional data and systems with many degrees of freedom are often
    characterized by  covariance matrices.
    In this paper, we consider the problem of simultaneously estimating
    the dimension of the principal (dominant)
    subspace of  these covariance matrices and obtaining an approximation to the subspace.  
    This problem arises in the popular principal component analysis (PCA), and 
    in many applications of machine learning, data analysis,
    signal and image processing, and others.
We first present a novel  method for estimating the dimension of the principal subspace. 
    We then show how this method can be coupled with a Krylov subspace method to 
simultaneously estimate the dimension 
and obtain an approximation to the subspace. The dimension estimation  is achieved at no additional cost.
 The proposed  method operates on a model selection framework,
where the novel selection criterion is derived  
based on random matrix perturbation theory ideas.
We present theoretical analyses which (a) show that the proposed method
achieves strong consistency 
(i.e., yields optimal solution as the number of data-points  $n\rightarrow \infty$),
and (b)  analyze conditions
for exact dimension estimation in the finite $n$ case. 
Using recent results, we show that our algorithm also yields near optimal PCA.
The proposed method avoids forming the sample covariance matrix (associated with the data) explicitly
 and computing the complete eigen-decomposition.
Therefore, the method is inexpensive, which
is particularly advantageous in modern data applications
where the covariance matrices can be very large.
Numerical experiments illustrate the performance of the proposed method in various applications.}
    \end{abstract}
    
    \section{Introduction}
    In many applications, for a given 
    set of data observations, covariance matrices are used to capture the interactions in  high dimensions, among 
    the many degrees of freedom. A popular approach to analyze such high dimensional data is to look for the 
    principal (components) subspace of the covariance matrix, which is of much lower dimension. For this,  it is often 
    required to first estimate the dimension of this principal 
    (dominant)  subspace of the  covariance matrix associated with the 
    observations~\cite{wax1985detection,kritchman2008determining,camba2008statistical,
    kritchman2009non,ubaru2016fast}. 
    These observations can  be treated as high 
    dimensional random quantities embedded in noise.
    
    Low rank approximation is a popular tool used in applications
    to reduce  high dimensional data~\cite{jolliffe2002principal,review,khanna2017deflation,ubaru2017low}. Determining 
    the lower dimension (rank $k$)
    remains a principal problem in these applications, 
    see~\cite{ubaru2016fast,ubaru2017fast} for discussions.
  In statistical signal and array processing,  detecting the number of signals
    in the observations of an array of passive sensors is a fundamental
    problem~\cite{wax1985detection,kritchman2009non,
    nadler2010nonparametric},  which can be posed as the above dimension estimation problem.
    Similar estimation problems occur in many other fields
    such as chemo-metrics~\cite{meloun2000critical,kritchman2008determining}, 
    econometrics and statistics~\cite{camba2008statistical}, 
    population genetics~\cite{patterson2006population}, and 
    reduced rank regression models~\cite{bura2003rank}. 
     Moreover, in most of these applications, once the dimension of the principal 
     subspace (approximate rank) is estimated, 
     it is also desired to obtain an approximation for this 
     principal  subspace, e.g., in principal component analysis (PCA)~\cite{jolliffe2002principal,khanna2017deflation},
     subspace tracking~\cite{comon1990tracking} and others. 
     Krylov subspace based methods~\cite{Saad-book3} are the most 
     popular and effective methods used in the literature to 
     compute an approximation for the principal subspace, 
     see~\cite{xu1994fast2,schneider2001krylov,ide2007change,musco2015randomized,rachakonda2016memory} 
     for examples.
    
    \paragraph{Prior Work:}
    The problem of estimating the rank  or the dimension of the 
    principal  subspace has been studied in various fields, and 
    a few different methods have been proposed in the literature. 
    In signal processing, 
  information theory 
    criteria based methods  have been proposed for
    the detection of number of signals~\cite{wax1985detection,nadler2010nonparametric}. 
    A few hypothesis testing based methods have also been proposed for dimension 
    estimation,
    see~\cite{xu1994fast2,patterson2006population,kritchman2008determining,kritchman2009non}. 
    In econometrics and statistics, various tests and methods have been proposed 
    to estimate the rank and the rank
statistic of a matrix, see, e.g.,~\cite{robin2000tests,donald2007rank,camba2008statistical}. 
 
 However, most of these methods 
    require computing the complete eigen-decomposition
    of the sample covariance matrix, which becomes impractical for 
    large dimensional matrices, e.g., in modern data applications and for large aperture arrays
    in array signal processing.
    Even forming the covariance matrix is not viable in many cases.
    The information criteria based methods are not applicable 
    when the data dimension $p$ is larger than the number of observations $n$,
    i.e., when $p>n$.
    Recently, a set of  inexpensive methods were  proposed for numerical rank estimation
    of data matrices~\cite{ubaru2016fast,ubaru2017fast}.  
   These methods combine ideas such as stochastic trace estimator, eigen-projectors and
   spectral densities to compute the rank inexpensively without any matrix decomposition. 
    However, methods that simultaneously estimate the dimension 
    and obtain an approximation to  the principal 
    subspace are lacking.

    \paragraph{Contributions:}
    In this work, we present a method for estimating the dimension 
    of  the principal  subspace of covariance matrices. 
    The method can be combined with the Krylov subspace methods (Krylov PCA) to compute an approximation 
    to the principal
    subspace, simultaneously. 
    The  method operates on a model selection framework,
and the proposed  selection criterion  requires computing only the top $k$ eigenvalues
    of the sample covariance matrix $\mbf{S}_n=\frac{1}{n}\mbf{XX}^T$, where $\mbf{X}$ 
    is the matrix containing $n$ observed data of dimension $p$, for a
    given integer $k\ll\{n,p\}$.
    In order to compute these eigenvalues, we
     can use the popular Lanczos algorithm~\cite{Saad-book3} which requires only
     matrix-vector products with $\mbf{S}_n$. Hence, we do not have to 
     form the sample covariance matrix $\mbf{S}_n=\frac{1}{n}\mbf{XX}^T$, explicitly. 
     Our approach can be viewed as a stopping criterion for the Krylov subspace 
     methods, and we can simultaneously estimate the dimension and compute the principal
     subspace at no additional cost. 

     The proposed selection criterion is derived using random matrix perturbation theory
     results~\cite{muirhead2009aspects},
     see section~\ref{sec:propose}.
     The criterion also includes a penalty (function) term which under mild assumptions
     yields us a strongly consistent estimator,
     i.e., the method estimates the  exact dimension as the number of data observations
     $n\rightarrow \infty$.
     We establish this strong consistency for the proposed method and also 
present performance analysis in section~\ref{sec:analysis}.
We derive  conditions on the signal strength and the noise level for 
avoiding incorrect dimension estimation in the finite $n$ case,
using random matrix theory results~\cite{johansson2000shape}.
Using the recent 
results in~\cite{musco2015randomized}, we also show that the method yields near optimal PCA, and the consistency results and the performance analysis hold for eigenvalues computed by the Krylov subspace methods.
Numerical experiments illustrate the performance of the proposed method 
in the number of signals detection application, numerical rank estimation of general 
data matrices, and in video foreground detection, an application of PCA.

     \section{Preliminaries}
     We begin by presenting the problem formulation for dimension estimation of the principal subspace.

     \paragraph{Notation:} We use lowercase and
uppercase bold letters, $\mbf{x}$ and $\mbf{A}$ for vectors and matrices, respectively.  
The Gaussian distribution with mean $\mu$ and covariance $\mbf{\bSigma}$
is denoted by $\mathcal{N}(\mu,\mbf{\bSigma})$.
Identity matrix is depicted as $\mbf{I}_p$, where $p$ is the order.
Convergence in distribution is denoted by $\rightarrow_d$.

          \paragraph{Problem Formulation:}\label{sec:model}
     The data observations which form the matrix $\mbf{X}$ are typically modeled as high 
    dimensional random quantities embedded in noise.
     We assume the standard Gaussian random model for the set of $n$ data observations 
     each of  dimension $p$. 
     We denote the $p$-dimensional data
     as $\{\mbf{x}_i\}_{i=1}^n$ described as 
     \begin{equation}
      \mbf{x}_i=\mbf{M}\mbf{s}_i+\sqrt{\sigma} \mbf{n}_i, \: i=1,\ldots,n
     \end{equation}
where $\mbf{M}$ is a $p\times q$ mixing matrix with $q$ independent
columns, $\mbf{s}_i$ are  $q\times 1$
vectors containing
the zero mean relevant data and $\mbf{n}_i$
are
$p$-dimensional
Gaussian (white) noise vectors with parameter $\sigma$
as the unknown
noise variance.  This is a standard assumption made in PCA~\cite{jolliffe2002principal}, probabilistic PCA~\cite{tipping1999probabilistic}, signal detection and subspace tracking~\cite{wax1985detection,xu1994fast2}, and in modern data analysis~\cite{bradde2017pca} and neural networks~\cite{hinton2006reducing} methods.
The true covariance matrix $\bSigma$ associated with the underlying 
      data is then
      assumed to be a low rank matrix of rank $q$, perturbed by noise of variance $\sigma$.  That is,
      \[
       \bSigma=\mbf{B}\mbf{B}^T+\sigma \mbf{I}_p,
      \]
      where $\mbf{B}\in\bR^{p\times q}, q\ll p$ and $span(\mbf{B})$ is the principal subspace. 
      The top $q$ eigenvalues $\lambda_i$ for $i=1,\ldots,q$ of $\bSigma$ will correspond to
      the $q$ dimensional relevant data  and the remaining $p-q$ eigenvalues 
      are related to noise  and are equal to $\sigma$. Hence, the subspace associated with the 
      top $q$  eigenvectors (eigenvalues) forms the principal subspace,
      which is of interest.
      
      The exact covariance matrix of the underlying data will 
      not be available, and hence
      we consider the    sample covariance matrix $\mbf{S}_n
      =\frac{1}{n}\sum_{i=1}^n\mbf{x}_i\mbf{x}_i^T$, using the $n$ (noisy) observations 
      of the data. 
      We wish  to estimate $q$, the dimension of (relevant) data in the observations,
   using the eigenvalues of the sample covariance
matrix $\mbf{S}_n$
denoted by $\ell_1\geq\ell_2\geq\ldots\geq\ell_p$.

   \section{Proposed Method}\label{sec:propose}
   In this section, we first present the proposed method for the
   principal subspace dimension estimation
   and derive it.
  We then discuss the Krylov subspace methods for computing 
  partial eigen-decomposition of matrices, and present the proposed algorithm for
  simultaneous estimating the dimension and computing an approximation
   to the principal subspace.
   
   The proposed  method is  based on model selection technique
and the proposed criterion is the following: 
   \begin{equation}\label{eq:crit}
   \arg\min_k \left[\frac{n}{2\sigma^2}\sum_{i=k+1}^p(\ell_i-\sigma)^2
   -C_n\frac{(p-k)(p-k-1)}{2}\right]     
    \end{equation}
where $\ell_i$, for $i=1,\ldots,p$ are the eigenvalues of the sample 
covariance matrix $\mbf{S}_n=\frac{1}{n}\mbf{XX}^T$, $\sigma$ is the noise variance,
and $C_n$ is a parameter that depends on $n$ (see sec.~\ref{sec:analysis} for details).
Note that the first term in the criterion depends on the sum of bottom $p-k$ eigenvalues of $\mbf{S}_n$, which can be
written as 
\[
 \sum_{i=k+1}^p(\ell_i-\sigma)^2 = \|\mbf{S}_n-\sigma \mbf{I}_p\|_F^2-\sum_{i=1}^k(\ell_i-\sigma)^2.
\]
Thus, the method requires computing only the top $k$ eigenvalues of $\mbf{S}_n$.
We can compute the norm as 
$\|\mbf{S}_n-\sigma \mbf{I}_p\|_F^2=\tfrac{1}{n^2}\|\mbf{X}\|_F^4-\tfrac{2\sigma}{n}\|\mbf{X}\|_F^2+p\sigma^2$.
Therefore, if Krylov subspace method such as the Lanczos algorithm~\cite{Saad-book3} 
is used for computing these eigenvalues, 
   then we do not need  to form $\mbf{S}_n=\frac{1}{n}\mbf{XX}^T$ explicitly.
   
   The Krylov subspace methods will also yield an approximation to the eigenvectors corresponding to the computed 
   eigenvalues. Therefore, we can use the above method as a stopping criterion for
   the Krylov subspace methods,
   and hence, estimate the dimension and approximate  the principal 
   subspace of the covariance matrix,
   simultaneously. We present the resulting algorithm in the latter part of this section.
   First, we derive the above criterion using concepts from random matrix perturbation theory.

   \subsection{Derivation}\label{sec:derive}
   We start the derivation of the proposed selection criterion using the following 
   key concept from random matrix  theory~\cite{muirhead2009aspects}:
  The sample covariance matrix $\mbf{S}_n$ approaches
   the true covariance matrix $\bSigma$ only in the expectation, i.e.,   
   $
    \mathbb{E}[\mbf{S}_n]\rightarrow\bSigma.
   $
More importantly, the sample covariance matrix  $\mbf{S}_n$ is a $\sqrt{n}$ consistent estimator of 
$\bSigma$.
\begin{proposition}\label{prop:1}
 $\mbf{S}_n$ is a $\sqrt{n}$ consistent estimator of 
$\bSigma$. That is, 
\[
 \sqrt{n}\mathtt{vec}(\mbf{S}_n-\bSigma)\rightarrow_d\mathcal{N}(0,\bOmega),
\]
where $\bOmega=(I+P_{\tvec(\mbf{S}_n)})(\bSigma\otimes\bSigma)$ is a $p^2\times p^2$ covariance matrix
with $\otimes$ denoting the Kronecker product and $P_{\tvec(\mbf{S}_n)}$
 the transposition-permutation matrix associated to $\tvec(\mbf{S}_n)$.
\end{proposition}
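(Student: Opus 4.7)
The plan is to recognize this as a classical multivariate central limit theorem (CLT) statement and then identify the asymptotic covariance via Gaussian fourth-moment (Isserlis/Wick) identities.

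First, I would invoke the Gaussian model from Section~\ref{sec:model}: under the assumed generative process each observation $\mbf{x}_i$ is zero-mean Gaussian with covariance $\bSigma$, and the $\mbf{x}_i$ are i.i.d. Consequently the rank-one matrices $\mbf{x}_i\mbf{x}_i^T$ are i.i.d.\ with mean $\mathbb{E}[\mbf{x}_i\mbf{x}_i^T]=\bSigma$, so that $\mathbb{E}[\mbf{S}_n]=\bSigma$. Since all moments of Gaussians are finite, the second moments of $\tvec(\mbf{x}_i\mbf{x}_i^T)$ exist, which is all the CLT requires.

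Second, I would apply the multivariate CLT to the $p^2$-dimensional i.i.d.\ sequence $\tvec(\mbf{x}_i\mbf{x}_i^T-\bSigma)$. Writing
\[
\sqrt{n}\,\tvec(\mbf{S}_n-\bSigma)=\frac{1}{\sqrt{n}}\sum_{i=1}^n\tvec(\mbf{x}_i\mbf{x}_i^T-\bSigma),
\]
the CLT yields convergence in distribution to $\mathcal{N}(0,\bOmega)$ with $\bOmega=\mathrm{Cov}(\tvec(\mbf{x}_1\mbf{x}_1^T))$. This already delivers the $\sqrt{n}$-consistency; what remains is to put $\bOmega$ into the advertised form.

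Third, I would compute $\bOmega$ entrywise. Using $\tvec(\mbf{x}\mbf{x}^T)=\mbf{x}\otimes\mbf{x}$, the entries of $\bOmega$ are of the form $\mathrm{Cov}(x_ix_j,\,x_kx_l)$. Isserlis' theorem for zero-mean Gaussians gives $\mathbb{E}[x_ix_jx_kx_l]=\bSigma_{ij}\bSigma_{kl}+\bSigma_{ik}\bSigma_{jl}+\bSigma_{il}\bSigma_{jk}$, so that $\mathrm{Cov}(x_ix_j,x_kx_l)=\bSigma_{ik}\bSigma_{jl}+\bSigma_{il}\bSigma_{jk}$. The first piece is exactly the $(ij),(kl)$ entry of $\bSigma\otimes\bSigma$, while the second is the same entry after a swap of indices inside one factor, which is the defining action of the commutation (transposition) matrix $P_{\tvec(\mbf{S}_n)}$ on $\bSigma\otimes\bSigma$. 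Summing gives $\bOmega=(I+P_{\tvec(\mbf{S}_n)})(\bSigma\otimes\bSigma)$.

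The main technical obstacle is the last bookkeeping step: verifying that the $\bSigma_{il}\bSigma_{jk}$ term corresponds precisely to the commutation-matrix contribution, rather than to some other reshuffling of indices. This is a routine but error-prone index calculation, and I would double-check it by evaluating both sides on an arbitrary rank-one test vector $\tvec(\mbf{u}\mbf{v}^T)$, where the action $P_{\tvec}\tvec(\mbf{u}\mbf{v}^T)=\tvec(\mbf{v}\mbf{u}^T)$ makes the identification transparent. Everything else (finite moments, i.i.d.\ structure, application of the CLT) is standard and can simply cite~\cite{muirhead2009aspects}.
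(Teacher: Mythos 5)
Your proof is correct and is essentially the argument the paper relies on: the paper does not prove Proposition~\ref{prop:1} itself but defers to standard references, and your derivation---the multivariate CLT applied to the i.i.d.\ terms $\tvec(\mbf{x}_i\mbf{x}_i^T-\bSigma)$ followed by the Isserlis/Wick fourth-moment computation identifying $\bOmega=(I+P_{\tvec(\mbf{S}_n)})(\bSigma\otimes\bSigma)$---is exactly the textbook proof those references contain. The only point worth flagging is that the formula for $\bOmega$ requires the observations $\mbf{x}_i$ to be Gaussian (otherwise fourth cumulants of $\mbf{s}_i$ would enter), which you correctly note is guaranteed by the paper's Gaussian model assumption.
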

The  proof of this proposition can be found in most standard multivariate statistical
theory textbooks, e.g.,~\cite{anderson2003introduction,muirhead2009aspects}.

%

Next, we consider the eigen-decomposition of the covariance matrix $\bSigma=\mbf{U}\Lambda\mbf{U}^T$.
Let us write $\mbf{U}=[\mbf{U}_q, \mbf{U}_{p-q}]$, where $\mbf{U}_q$ is a matrix containing the top
$q$ eigenvectors (principal subspace) of $\bSigma$ as columns. Similarly, let us consider the eigen-decomposition 
of the sample covariance matrix $\mbf{S}_n={\mbf{G}}\mbf{L}{\mbf{G}}^T$, with
$\mbf{G}_q$ containing the top $q$ eigenvectors of $\mbf{S}_n$ as columns.
We can then prove the consistency of $\mbf{G}_q$ using the random matrix perturbation approach
on $\mbf{S}_n$.

\begin{proposition}\label{prop:2}
 Let $q$ be the numerical rank of $\bSigma$ and assume that the smallest  eigenvalue corresponding to the data
 is well above zero, i.e.,  that 
 $\lambda_q>\veps>0$ for a small $\veps$. Then as $n\rightarrow\infty$,
 \[
  \mbf{G}_q\rightarrow_d\mbf{U}_q.
 \]
\end{proposition}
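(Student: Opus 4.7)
The plan is to combine the $\sqrt{n}$-consistency of $\mbf{S}_n$ from Proposition \ref{prop:1} with a standard spectral perturbation argument. From Proposition \ref{prop:1}, $\sqrt{n}\,\tvec(\mbf{S}_n-\bSigma) \to_d \mathcal{N}(0,\bOmega)$ and therefore $\tvec(\mbf{S}_n-\bSigma)\to_p 0$, which in turn gives $\|\mbf{S}_n-\bSigma\|_2 \to_p 0$ by the continuous mapping theorem. Once we have operator-norm consistency of the sample covariance matrix, the convergence of its top-$q$ invariant subspace to that of $\bSigma$ is a classical consequence of the Davis--Kahan $\sin\Theta$ theorem, provided that we have a positive eigengap separating the $q$-th and $(q+1)$-st eigenvalues of $\bSigma$.

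The hypothesis $\lambda_q>\veps>0$, together with the model $\bSigma=\mbf{B}\mbf{B}^T+\sigma\mbf{I}_p$, supplies precisely this gap: the top $q$ eigenvalues of $\bSigma$ are of the form $\tilde\lambda_i+\sigma$ for $i=1,\dots,q$ with $\tilde\lambda_q>0$, while the bottom $p-q$ eigenvalues are all exactly $\sigma$, so $\lambda_q-\lambda_{q+1}=\tilde\lambda_q\geq \veps>0$. Invoking Davis--Kahan then yields a bound of the form
\[
\|\mbf{G}_q\mbf{G}_q^T-\mbf{U}_q\mbf{U}_q^T\|_2 \;\leq\; \frac{C\,\|\mbf{S}_n-\bSigma\|_2}{\lambda_q-\sigma},
\]
and since the numerator tends to $0$ in probability while the denominator is bounded below by a positive constant, the projector onto the estimated principal subspace converges in probability, and hence in distribution, to the projector onto the true principal subspace. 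A Slutsky-type step then upgrades the $\sqrt n$ Gaussian limit for $\mbf{S}_n-\bSigma$ into the claimed limiting statement for $\mbf{G}_q$.

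The main obstacle, and the point that should be addressed carefully rather than glossed over, is that the statement $\mbf{G}_q\to_d\mbf{U}_q$ is ambiguous as written, because eigenvectors are only determined up to sign and, when eigenvalues are repeated, only up to an orthogonal rotation within each eigenspace. The natural way to give the statement rigorous meaning is in terms of the orthogonal projector $\mbf{G}_q\mbf{G}_q^T$ onto the estimated principal subspace, or, equivalently, to choose the columns of $\mbf{G}_q$ via a canonical alignment (e.g., the orthogonal Procrustes solution to $\min_{\mbf{Q}\in\mathcal{O}(q)}\|\mbf{G}_q\mbf{Q}-\mbf{U}_q\|_F$). Under either reading, the Davis--Kahan bound above gives the desired convergence. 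A secondary technical point is verifying that the $n\to\infty$ regime yields $\|\mbf{S}_n-\bSigma\|_2\to 0$ in the appropriate sense even though Proposition \ref{prop:1} is stated entry-wise through $\tvec(\cdot)$; this follows because the spectral norm is a continuous function of the matrix entries and the Gaussian limit implies tightness, so the rescaled difference is stochastically bounded and the unscaled difference vanishes.
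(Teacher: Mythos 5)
Your proof is correct in substance but takes a genuinely different route from the paper's. The paper writes $\mbf{S}_n=\bSigma+\veps\mbf{E}$ with $\veps\mbf{E}=O_p(1/\sqrt{n})$, computes $\mbf{S}_n\mbf{U}_q\Lambda_q^{-1}=\mbf{U}_q+\veps\mbf{E}\mbf{U}_q\Lambda_q^{-1}$, uses the hypothesis $\lambda_q>\veps$ only to bound $\Lambda_q^{-1}$, and then asserts $\mbf{G}_q=\mbf{U}_q+O_p(1/\sqrt{n})$, deferring to Anderson (1963) the passage from ``$\mathrm{span}(\mbf{U}_q)$ is nearly invariant under $\mbf{S}_n$'' to ``the top-$q$ eigenvectors of $\mbf{S}_n$ are close to $\mbf{U}_q$.'' That last step is precisely where an eigengap is required, and your Davis--Kahan argument supplies it explicitly, yielding the projector bound $\|\mbf{G}_q\mbf{G}_q^T-\mbf{U}_q\mbf{U}_q^T\|_2\leq C\|\mbf{S}_n-\bSigma\|_2/(\lambda_q-\sigma)=O_p(1/\sqrt{n})$; this is in fact the form the paper actually uses downstream in the Corollary on $\mbf{Q}_G$, so your route arguably proves the more useful statement directly. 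Your remark that the claim is only well posed at the level of projectors (or after Procrustes alignment) is well taken and consistent with how the result is used. Two small caveats: (i) the gap you need is $\lambda_q-\lambda_{q+1}=\lambda_q-\sigma$, whereas the hypothesis as literally written only bounds $\lambda_q$ away from zero, not away from $\sigma$ --- both your argument and the paper's require the charitable reading that the signal eigenvalues are separated from the noise level, so this is a defect of the statement rather than of your proof; (ii) since $\mbf{U}_q$ (or its projector) is deterministic, convergence in probability already implies the stated convergence in distribution, so the final Slutsky step is unnecessary for the proposition as written, though the explicit $O_p(1/\sqrt{n})$ rate your bound provides is what the subsequent results rely on.
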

A version of the proof of this proposition is given in the supplementary, which was first derived in~\cite{anderson1963asymptotic}.
We then have the following result (proof in the supplementary).

\begin{corollary}
 The orthogonal projector onto the space spanned by the eigenvectors corresponding to the noise related
 eigenvalues satisfies
 \[
  {\mbf{Q}}_G = \mbf{G}_{p-q}\mbf{G}_{p-q}^T=\mbf{U}_{p-q}\mbf{U}_{p-q}^T+O_p\left(\frac{1}{\sqrt{n}}\right).
 \]

\end{corollary}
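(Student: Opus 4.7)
My plan is to reduce the statement about the noise-subspace projector to a statement about the signal-subspace projector via the complementarity relation, and then to upgrade the qualitative convergence of Proposition~\ref{prop:2} to the quantitative $O_p(n^{-1/2})$ rate using the $\sqrt{n}$-consistency of $\mbf{S}_n$ supplied by Proposition~\ref{prop:1}.

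First I would write
\[
\mbf{Q}_G \;=\; \mbf{I}_p - \mbf{G}_q\mbf{G}_q^T, \qquad
\mbf{U}_{p-q}\mbf{U}_{p-q}^T \;=\; \mbf{I}_p - \mbf{U}_q\mbf{U}_q^T,
\]
so that proving the corollary is equivalent to showing
\[
\mbf{G}_q\mbf{G}_q^T \;=\; \mbf{U}_q\mbf{U}_q^T + O_p\!\left(\tfrac{1}{\sqrt n}\right).
\]
Setting $\mbf{E}=\mbf{G}_q-\mbf{U}_q$ (after fixing the sign ambiguity of the eigenvectors, say by choosing $\mbf{G}_q$ so that $\mbf{U}_q^T\mbf{G}_q$ has nonnegative diagonal), expansion gives
\[
\mbf{G}_q\mbf{G}_q^T - \mbf{U}_q\mbf{U}_q^T
\;=\; \mbf{U}_q\mbf{E}^T + \mbf{E}\,\mbf{U}_q^T + \mbf{E}\mbf{E}^T,
\]
which is $O_p(\|\mbf{E}\|)+O_p(\|\mbf{E}\|^2)$. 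The whole corollary therefore reduces to showing $\|\mbf{E}\|=O_p(n^{-1/2})$.

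For this I would strengthen Proposition~\ref{prop:2} from convergence in distribution to a rate, which the Anderson~(1963) argument cited in the supplementary in fact delivers: using the gap assumption $\lambda_q>\veps>0$ (so the top-$q$ eigenvalues of $\bSigma$ are strictly separated from the noise eigenvalues $\sigma$), one can represent both spectral projectors by Kato's contour integral
\[
\mbf{U}_q\mbf{U}_q^T - \mbf{G}_q\mbf{G}_q^T
= \tfrac{1}{2\pi i}\!\oint_{\Gamma}\!\bigl[(\mbf{S}_n-zI)^{-1}-(\bSigma-zI)^{-1}\bigr]\,dz,
\]
with $\Gamma$ a fixed contour that encloses $\lambda_1,\ldots,\lambda_q$ and excludes $\sigma$. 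A resolvent identity then replaces the integrand by $(\bSigma-zI)^{-1}(\mbf{S}_n-\bSigma)(\mbf{S}_n-zI)^{-1}$, and since the resolvents stay uniformly bounded on $\Gamma$ (with high probability, using that $\|\mbf{S}_n-\bSigma\|\to 0$), the norm of the integral is of order $\|\mbf{S}_n-\bSigma\|$. Proposition~\ref{prop:1} gives $\|\mbf{S}_n-\bSigma\|=O_p(n^{-1/2})$, so $\|\mbf{E}\|=O_p(n^{-1/2})$ and the corollary follows.

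The main obstacle I expect is the passage from Proposition~\ref{prop:2}'s qualitative consistency to the quantitative $O_p(n^{-1/2})$ bound, since the result as stated only asserts weak convergence; making the resolvent/contour argument rigorous requires controlling the event that $\Gamma$ separates the sample spectrum correctly, which relies on the spectral-gap hypothesis $\lambda_q>\veps$. A Davis--Kahan $\sin\Theta$ bound would serve as a clean alternative, giving directly $\|\mbf{G}_q\mbf{G}_q^T-\mbf{U}_q\mbf{U}_q^T\|\le \|\mbf{S}_n-\bSigma\|/(\lambda_q-\sigma)$, after which the $O_p(n^{-1/2})$ conclusion is immediate from Proposition~\ref{prop:1}.
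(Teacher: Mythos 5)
Your proposal is correct, and its skeleton coincides with the paper's own proof: the paper also passes through the complementarity relation ${\mbf{Q}}_G=\mbf{I}_p-\mbf{G}_q\mbf{G}_q^T$ and expands $\mbf{G}_q\mbf{G}_q^T$ with $\mbf{G}_q=\mbf{U}_q+O_p(1/\sqrt{n})$, absorbing the cross and quadratic terms into $O_p(1/\sqrt{n})$ exactly as you do with $\mbf{E}=\mbf{G}_q-\mbf{U}_q$. Where you genuinely diverge is in how the rate $\|\mbf{E}\|=O_p(n^{-1/2})$ is justified. The paper does not revisit this point at all: its supplementary proof of Proposition~\ref{prop:2} already derives the stronger statement $\mbf{G}_q=\mbf{U}_q+O_p(1/\sqrt{n})$ via a first-order perturbation expansion $\mbf{S}_n\mbf{U}_q\Lambda_q^{-1}=\mbf{U}_q+\veps\,\mbf{E}\mbf{U}_q\Lambda_q^{-1}$ (citing Anderson 1963), and the corollary simply quotes that. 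You correctly observe that Proposition~\ref{prop:2} \emph{as stated} only asserts convergence in distribution, and you fill the gap with a contour-integral/resolvent argument (or Davis--Kahan) that converts $\|\mbf{S}_n-\bSigma\|=O_p(n^{-1/2})$ into a projector bound using the spectral gap $\lambda_q>\sigma$. This is more rigorous than the paper's treatment --- it also sidesteps the eigenvector sign/basis-alignment issue that the paper glosses over, since the projector difference is basis-invariant --- at the cost of invoking machinery (Kato's integral or $\sin\Theta$ theorems) that the paper never makes explicit. Both routes ultimately rest on Proposition~\ref{prop:1}; yours makes the logical dependence on the gap hypothesis and on the operator-norm consistency of $\mbf{S}_n$ explicit, which is a genuine improvement in rigor even though the computational core of the two proofs is the same.
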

We next have the following result that gives the asymptotic behavior of
the bottom $p-q$ eigenvalues of 
$\mbf{S}_n$.

 \begin{proposition}\label{prop:3}
 The asymptotic distribution of $\sqrt{n}\tvec({\mbf{Q}}_G(\mbf{S}_n-\sigma I){\mbf{Q}}_G)$ is given by
 \[
  \sqrt{n}\tvec({\mbf{Q}}_G(\mbf{S}_n-\sigma \mbf{I}_p){\mbf{Q}}_G)\rightarrow
  \mathcal{N}(0,\hat\bOmega),
 \]
where $\hat\bOmega = (\mbf{Q}_U \otimes\mbf{Q}_U)\bOmega  (\mbf{Q}_U \otimes\mbf{Q}_U)$,
where $\mbf{Q}_U=\mbf{U}_{p-q}\mbf{U}_{p-q}^T$ and $\bOmega$ is as Proposition~\ref{prop:1}.
\end{proposition}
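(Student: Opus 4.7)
The plan is to reduce the quantity $\sqrt{n}\tvec(\mbf{Q}_G(\mbf{S}_n-\sigma \mbf{I}_p)\mbf{Q}_G)$ to the linear transformation $(\mbf{Q}_U\otimes\mbf{Q}_U)\sqrt{n}\tvec(\mbf{S}_n-\bSigma)$ plus an $o_p(1)$ remainder, and then invoke Proposition~\ref{prop:1} together with Slutsky's theorem. The two structural facts that power the reduction are the following: first, by the corollary, $\mbf{Q}_G=\mbf{Q}_U+\mbf{E}$ with $\|\mbf{E}\|=O_p(1/\sqrt{n})$; second, since $\bSigma=\mbf{BB}^T+\sigma \mbf{I}_p$ and $\mbf{Q}_U$ projects onto the noise subspace, we have $\mbf{Q}_U\mbf{B}=0$ and consequently $\mbf{Q}_U(\bSigma-\sigma\mbf{I}_p)=0=(\bSigma-\sigma\mbf{I}_p)\mbf{Q}_U$.

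First I would write $\mbf{S}_n-\sigma\mbf{I}_p=(\mbf{S}_n-\bSigma)+\mbf{BB}^T$ and expand
\[
\mbf{Q}_G(\mbf{S}_n-\sigma\mbf{I}_p)\mbf{Q}_G=(\mbf{Q}_U+\mbf{E})[(\mbf{S}_n-\bSigma)+\mbf{BB}^T](\mbf{Q}_U+\mbf{E})
\]
into nine terms. Every term containing $\mbf{BB}^T$ flanked by at least one $\mbf{Q}_U$ vanishes; the only surviving signal-related term is $\mbf{E}\mbf{BB}^T\mbf{E}=O_p(1/n)$. For the noise-perturbation block $\mbf{S}_n-\bSigma$, Proposition~\ref{prop:1} gives $\|\mbf{S}_n-\bSigma\|=O_p(1/\sqrt{n})$, so the three terms carrying at least one factor of $\mbf{E}$ are each $O_p(1/n)$ or smaller. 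Thus
\[
\mbf{Q}_G(\mbf{S}_n-\sigma\mbf{I}_p)\mbf{Q}_G=\mbf{Q}_U(\mbf{S}_n-\bSigma)\mbf{Q}_U+O_p(1/n),
\]
and multiplying by $\sqrt{n}$ yields a residual of order $O_p(1/\sqrt{n})=o_p(1)$.

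Next I would pass to vec form using the identity $\tvec(AXB)=(B^T\otimes A)\tvec(X)$ with $A=B=\mbf{Q}_U$, noting $\mbf{Q}_U^T=\mbf{Q}_U$, so that
\[
\sqrt{n}\,\tvec(\mbf{Q}_U(\mbf{S}_n-\bSigma)\mbf{Q}_U)=(\mbf{Q}_U\otimes\mbf{Q}_U)\sqrt{n}\,\tvec(\mbf{S}_n-\bSigma).
\]
By Proposition~\ref{prop:1}, $\sqrt{n}\tvec(\mbf{S}_n-\bSigma)\to_d\mathcal{N}(0,\bOmega)$. Applying the continuous mapping theorem to the linear map $v\mapsto(\mbf{Q}_U\otimes\mbf{Q}_U)v$ and using symmetry of $\mbf{Q}_U\otimes\mbf{Q}_U$ gives asymptotic normality with covariance $\hat\bOmega=(\mbf{Q}_U\otimes\mbf{Q}_U)\bOmega(\mbf{Q}_U\otimes\mbf{Q}_U)$. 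Finally, Slutsky's theorem absorbs the $o_p(1)$ residual, yielding the stated limit.

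The main obstacle, and what requires a careful hand, is the bookkeeping in step one: one must verify that all cross-terms involving $\mbf{E}$ and the large rank-$q$ signal part $\mbf{BB}^T$ are actually killed by the orthogonality $\mbf{Q}_U\mbf{B}=0$ (otherwise they would contribute at scale $\sqrt{n}\cdot\|\mbf{E}\|=O_p(1)$ and spoil the asymptotic normality). Once that cancellation is confirmed, the remaining bounds are routine applications of Proposition~\ref{prop:1} and the corollary.
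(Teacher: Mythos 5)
Your proof is correct, and it follows the same overall strategy as the paper's: replace $\mbf{Q}_G$ by $\mbf{Q}_U$ up to a negligible remainder, reduce to $\mbf{Q}_U(\mbf{S}_n-\bSigma)\mbf{Q}_U$, pass to vec form via $\tvec(\mbf{A}\mbf{X}\mbf{B})=(\mbf{B}^T\otimes\mbf{A})\tvec(\mbf{X})$, and apply Proposition~\ref{prop:1} with the continuous mapping and Slutsky theorems. The organization of the reduction differs, though, and in a way that matters. The paper swaps $\mbf{Q}_G\to\mbf{Q}_U$ one factor at a time and only records the discarded terms as $O_p(1/\sqrt{n})$, invoking $\mbf{Q}_U\bSigma\mbf{Q}_U=\sigma\mbf{Q}_U$ at the end; taken at face value, an $O_p(1/\sqrt{n})$ remainder becomes $O_p(1)$ after the $\sqrt{n}$ scaling and would not obviously vanish (the terms are in fact $O_p(1/n)$, because $(\mbf{S}_n-\sigma\mbf{I}_p)\mbf{Q}_U=(\mbf{S}_n-\bSigma)\mbf{Q}_U$ is itself $O_p(1/\sqrt{n})$, but the paper does not make this explicit). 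You instead split $\mbf{S}_n-\sigma\mbf{I}_p=(\mbf{S}_n-\bSigma)+\mbf{B}\mbf{B}^T$ up front, kill all signal cross-terms exactly via $\mbf{Q}_U\mbf{B}=0$, and bound every surviving error term by $O_p(1/n)$, so the $\sqrt{n}$-scaled residual is genuinely $o_p(1)$. Your version is therefore a tighter and more self-contained writeup of the same argument; the only small thing to add for completeness is the one-line justification that $\|\mbf{S}_n-\bSigma\|=O_p(1/\sqrt{n})$ follows from the tightness of the limiting law in Proposition~\ref{prop:1}.
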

We defer the proof to the supplementary.
This leads to the following result.
\begin{lemma}\label{lemm:1}
 Let $\mathcal{L}$ be defined as 
 \[
  \mathcal{L}=\frac{n}{2\sigma^2}\sum_{i=q+1}^p(\ell_i-\sigma)^2,
 \]
where $\ell_i$ are the eigenvalues of $\mbf{S}_n$ and $\sigma$ is the noise variance. 
Then  $\mathcal{L}$ follows asymptotically
a $\chi^2$ 
chi-square distribution with $\eta=\frac{1}{2}(p-q)(p-q-1)$ degrees of freedom. 
\end{lemma}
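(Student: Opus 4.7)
The plan is to express $\mathcal{L}$ as a continuous functional of the Gaussian limit from Proposition~\ref{prop:3}, and then read off the chi-square law from the projector structure of the limiting covariance $\hat{\bOmega}$.

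First, I would rewrite the sum in $\mathcal{L}$ as a squared Frobenius norm. Because $\mbf{Q}_G$ is the orthogonal projector onto $\mathrm{span}\{\mbf{g}_{q+1},\dots,\mbf{g}_p\}$, the matrix $\mbf{A}_n := \mbf{Q}_G(\mbf{S}_n-\sigma\mbf{I}_p)\mbf{Q}_G$ has the spectral expansion $\sum_{i=q+1}^p(\ell_i-\sigma)\,\mbf{g}_i\mbf{g}_i^T$, so that
\[
 \sum_{i=q+1}^p(\ell_i-\sigma)^2 \;=\; \|\mbf{A}_n\|_F^2 \;=\; \|\tvec(\mbf{A}_n)\|_2^2,
\]
and hence $\mathcal{L} = \|\sqrt{n}\,\tvec(\mbf{A}_n)\|_2^2 / (2\sigma^2)$. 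Proposition~\ref{prop:3} then gives $\sqrt{n}\,\tvec(\mbf{A}_n)\rightarrow_d\mbf{y}$ with $\mbf{y}\sim\mathcal{N}(0,\hat{\bOmega})$, and the continuous mapping theorem reduces the problem to identifying the law of $\|\mbf{y}\|_2^2/(2\sigma^2)$.

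The main work is then an eigenvalue analysis of $\hat{\bOmega}=(\mbf{Q}_U\otimes\mbf{Q}_U)\bOmega(\mbf{Q}_U\otimes\mbf{Q}_U)$. Because the bottom $p-q$ eigenvalues of $\bSigma$ all equal $\sigma$, we have $\mbf{Q}_U\bSigma\mbf{Q}_U=\sigma\mbf{Q}_U$, and I would use this together with the standard commutation identities $P(\bSigma\otimes\bSigma)=(\bSigma\otimes\bSigma)P$ and $P(\mbf{Q}_U\otimes\mbf{Q}_U)=(\mbf{Q}_U\otimes\mbf{Q}_U)P$ to simplify
\[
 \hat{\bOmega} \;=\; \sigma^2(\mbf{I}+P)(\mbf{Q}_U\otimes\mbf{Q}_U) \;=\; 2\sigma^2\,\mbf{\Pi},
\]
where $\mbf{\Pi}=\tfrac12(\mbf{I}+P)(\mbf{Q}_U\otimes\mbf{Q}_U)$ is the product of two commuting orthogonal projectors, hence itself an orthogonal projector whose range is $\tvec$ applied to the subspace of symmetric matrices supported on the noise block. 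The lemma then follows from the standard fact that if $\mbf{y}\sim\mathcal{N}(0,2\sigma^2\mbf{\Pi})$ with $\mbf{\Pi}$ an orthogonal projector of rank $\eta$, then $\|\mbf{y}\|_2^2/(2\sigma^2)$ is exactly $\chi^2_\eta$.

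The main obstacle, in my view, is the careful bookkeeping of the degree-of-freedom count. One has to simultaneously diagonalise $P$ (eigenvalues $\pm 1$ on the symmetric / antisymmetric subspaces of $\tvec$-space) and $\mbf{Q}_U\otimes\mbf{Q}_U$ (eigenvalues $1$ on the noise block and $0$ elsewhere), read off the rank of $\mbf{\Pi}$ from the intersection of the $+1$-eigenspaces, and verify that this matches $\eta=\tfrac12(p-q)(p-q-1)$ as claimed. The residual error $\mbf{Q}_G-\mbf{Q}_U=O_p(n^{-1/2})$ from the Corollary contributes a cross-term of order $O_p(n^{-1})$ to $\|\mbf{A}_n\|_F^2$, which becomes $o_p(1)$ after the $n/(2\sigma^2)$ scaling and is absorbed via Slutsky's theorem; this is already essentially packaged into Proposition~\ref{prop:3} and requires no separate argument.
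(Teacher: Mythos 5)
Your proposal follows the same route as the paper's own proof: rewrite the sum as $\|\sqrt{n}\,\tvec(\mbf{Q}_G(\mbf{S}_n-\sigma\mbf{I}_p)\mbf{Q}_G)\|_2^2$, invoke Proposition~\ref{prop:3} together with the continuous mapping theorem, and read the limiting law off the spectrum of $\hat{\bOmega}$. Your identification $\hat{\bOmega}=2\sigma^2\mbf{\Pi}$ with $\mbf{\Pi}=\tfrac12(\mbf{I}+P)(\mbf{Q}_U\otimes\mbf{Q}_U)$ an orthogonal projector is in fact tidier than the paper's version: the paper asserts that the relevant eigenvalues of $\hat{\bOmega}$ all equal $\sigma^2$ and never surfaces the factor of $2$ contributed by $(\mbf{I}+P)$ on the symmetric subspace, which is silently absorbed by the $1/(2\sigma^2)$ normalisation of $\mathcal{L}$; your computation $\bSigma\mbf{Q}_U=\sigma\mbf{Q}_U$ plus the commutation identities makes this explicit and correct.

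The one step you defer, however, is precisely the step the paper also does not carry out, and it is a genuine gap rather than mere bookkeeping. The range of $\mbf{\Pi}$ is the $\tvec$ image of the symmetric matrices supported on the noise block, whose dimension is $\tfrac12(p-q)(p-q+1)$ --- it includes the $p-q$ diagonal degrees of freedom --- whereas the claimed $\eta=\tfrac12(p-q)(p-q-1)$ counts only the strictly off-diagonal pairs. A direct check confirms this: in the Gaussian limit the noise-block fluctuation matrix has independent off-diagonal entries of variance $\sigma^2$ and diagonal entries of variance $2\sigma^2$, so its squared Frobenius norm divided by $2\sigma^2$ is $\chi^2$ with $(p-q)+\tfrac12(p-q)(p-q-1)=\tfrac12(p-q)(p-q+1)$ degrees of freedom. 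The paper sidesteps this by simply declaring that $\eta$ is the degree of freedom in $\mbf{Q}_G(\mbf{S}_n-\sigma\mbf{I}_p)\mbf{Q}_G$ without computing the rank. So your setup is sound and matches the paper's, but if you carry out the rank verification you flag as the main obstacle, you will land on $\tfrac12(p-q)(p-q+1)$ rather than the $\eta$ in the statement; to reproduce the stated count you would need an additional argument removing the diagonal contributions, which neither your proposal nor the paper supplies.
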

\begin{proof}
Suppose $L_{p-q}$ is a diagonal matrix with the bottom $p-q$ eigenvalues of $ \mbf{S}_n-\sigma \mbf{I}_p$
as entries, then we have
\begin{eqnarray*}
 n\sum_{i=q+1}^p(\ell_i-\sigma)^2 &=& n \tr(L_{p-q}^2)\\
  &=& n \tr({\mbf{Q}}_G (\mbf{S}_n-\sigma \mbf{I}_p)^2{\mbf{Q}}_G) \\
   &=& \|\sqrt{n} ({\mbf{Q}}_G (\mbf{S}_n-\sigma \mbf{I}_p){\mbf{Q}}_G)\|_F^2\\
   &=& \|\sqrt{n} \tvec({\mbf{Q}}_G (\mbf{S}_n-\sigma \mbf{I}_p){\mbf{Q}}_G)\|_2^2.
\end{eqnarray*}
From Proposition~\ref{prop:3}, the above sum follows asymptotically 
a  $\eta=\tfrac{1}{2}(p-q)(p-q-1)$ weighted $\chi_1^2$ distribution~\cite{anderson2003introduction}, 
where the $\eta$ weights correspond to the first $\eta$ 
eigenvalues 
of $\hat\bOmega = (\mbf{Q}_U \otimes\mbf{Q}_U)\bOmega  (\mbf{Q}_U \otimes\mbf{Q}_U)$.
 Note that $\eta$ is the degree of freedom in ${\mbf{Q}}_G (\mbf{S}_n-\sigma \mbf{I}_p){\mbf{Q}}_G$.

Given the eigenpairs of $\bSigma$ to be $(\lambda_i,\mbf{u}_i)\:, i=1,\ldots,p$, 
the eigenpairs of $\bOmega$ will be
$(\lambda_i*\lambda_j,\mbf{u}_i\otimes \mbf{u}_j),\: i,j=1,\ldots,p$ from the property of Kronecker products,
see~\cite[Thm. 4.2.12]{horn1990matrix}. $\mbf{Q}_U$ is a projector onto the span of eigenvectors 
corresponding to the bottom $p-q$ eigenvalues of $\bSigma$, which are all equal to $\sigma$.
Hence, the top $\eta$ eigenvalues of  $\hat\bOmega$ will be all equal to $\sigma^2$, since
$(\mbf{Q}_U \otimes\mbf{Q}_U)$ is a projector onto space spanned by the eigenvectors corresponding to 
the bottom $(p-q)^2$ eigenvalues of $\bOmega$. Hence, the weights of the weighted $\chi^2$ are all equal
to $\sigma^2$. Thus, by reweighting the above sum, $ \mathcal{L}$ will have asymptotically 
$\chi_\eta^2$ distribution\footnote{Anderson made a similar observation (of asymptotically $\chi_\eta^2$ distribution)
in~\cite{anderson1963asymptotic} for a given eigenvalue $\lambda_k$ of $\bSigma$ 
with multiplicity $q_k$ and the sum of eigenvalues of $(\mbf{S}_n-\lambda_k\mbf{I})$.
In our case, $\lambda_k=\sigma$ with multiplicity $q_k=p-q$.}.
\end{proof}

Therefore, the above
$ \mathcal{L}(\mbf{S}_n,q)$
can be used in model selection criterion for estimating $q$, the dimension of the principal subspace.

\begin{theorem}\label{theo:criteria}
The following criterion  yields an estimation for the dimension $q$
of the principal subspace of the covariance matrix $\bSigma$:
\begin{equation}
   q=\arg\min_k \left[\frac{n}{2\sigma^2}\sum_{i=k+1}^p(\ell_i-\sigma)^2
   -C_n\tfrac{(p-k)(p-k-1)}{2}\right],     
    \end{equation}
where $\ell_i$, for $i=1,\ldots,p$ are eigenvalues of the sample 
covariance matrix $\mbf{S}_n=\frac{1}{n}\mbf{XX}^T$, $\sigma$ is the noise variance
and $C_n$ is a parameter that depends on $n$.
\end{theorem}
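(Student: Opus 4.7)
The plan is to treat the bracketed expression as a penalized goodness-of-fit statistic and argue, case by case on $k$ versus the true $q$, that its minimizer is $q$ (asymptotically, and with the right choice of $C_n$). Lemma~\ref{lemm:1} already does the decisive work at $k=q$: it says that $\mathcal{L}(\mbf{S}_n,q) = \frac{n}{2\sigma^2}\sum_{i=q+1}^p(\ell_i-\sigma)^2$ is asymptotically $\chi^2_\eta$ with $\eta = \frac{1}{2}(p-q)(p-q-1)$ degrees of freedom, so its mean equals exactly the penalty constant $\frac{(p-q)(p-q-1)}{2}$ (up to the factor $C_n$). Thus at $k=q$ the criterion evaluates, in expectation, to $\bigl(1-C_n\bigr)\eta$, which for $C_n\to 1$ is $O_p(\sqrt{\eta})$ by the central limit behavior of a $\chi^2_\eta$ around its mean.

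Next I would handle the two bad regimes separately. For \emph{under-estimation} $k<q$, the sum $\sum_{i=k+1}^p(\ell_i-\sigma)^2$ picks up at least one ``signal'' eigenvalue $\ell_j$ with $j\le q$; by Proposition~\ref{prop:2} and the assumption $\lambda_q>\veps$, such an $\ell_j$ converges to a true eigenvalue strictly larger than $\sigma$, so $(\ell_j-\sigma)^2$ stays bounded away from $0$. Multiplying by $n/(2\sigma^2)$ makes the first term grow linearly in $n$, whereas the penalty grows only like $C_n$ (polynomial or slower). Choosing $C_n=o(n)$ therefore guarantees that the bracket at any $k<q$ diverges to $+\infty$, so such $k$ cannot be the arg-min.

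For \emph{over-estimation} $k>q$, the sum in the first term is now a sub-sum of what Lemma~\ref{lemm:1} controls, so it is still $O_p(1)$ (its expected order is the reduced degrees of freedom). The difference with the $k=q$ value of the criterion is then
\[
\tfrac{n}{2\sigma^2}\!\!\sum_{i=q+1}^{k}(\ell_i-\sigma)^2 \;+\; C_n\,\bigl[\tfrac{(p-q)(p-q-1)}{2}-\tfrac{(p-k)(p-k-1)}{2}\bigr],
\]
where the bracket on the right is a strictly positive combinatorial quantity. The first sum is $O_p(1)$ (it collects asymptotically noise-level eigenvalues by Proposition~\ref{prop:2}), while the penalty gap on the right grows like $C_n$. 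Hence as long as $C_n\to\infty$, this difference is positive with probability tending to one, so any $k>q$ is strictly worse than $k=q$.

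Putting the two bounds together, any sequence $C_n$ with $C_n\to\infty$ and $C_n=o(n)$ (the range discussed in Section~\ref{sec:analysis}) forces $\Pr(\hat q=q)\to 1$. The main obstacle is the over-estimation case: the under-estimation case only needs a size-of-signal argument via Proposition~\ref{prop:2}, but the over-estimation case needs the sharper $\chi^2$-level control from Lemma~\ref{lemm:1} together with the non-degeneracy of the combinatorial penalty gap $\tfrac{1}{2}[(p-q)(p-q-1)-(p-k)(p-k-1)]$ to show it is not accidentally zero or comparable to the stochastic fluctuation. Once these two one-sided bounds are recorded, the theorem follows by taking arg-min.
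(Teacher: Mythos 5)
Your argument is essentially sound, but it is not the paper's proof of this theorem --- it is, in substance, the paper's proof of Theorem~\ref{theo:1}. The paper proves Theorem~\ref{theo:criteria} by a quite different route: it invokes the model-selection template $IC(k)=L(n,k)-\mathbb{E}(L(n,k))$, identifies $L(n,k)$ via Lemma~\ref{lemm:1} as a weighted sum of $\chi^2_{(1)}$ variables, computes the mean $\mathbb{E}(L(n,k))=\sum_{i,j=k+1;\,i\neq j}^{p}\ell_i\ell_j/(2\sigma^2)$ through a Gamma approximation, and observes that this tends to $\eta=\tfrac{1}{2}(p-k)(p-k-1)$ since $\ell_i\to\sigma$ for $i>q$; the point of that proof is to \emph{derive the form of the penalty}, not to show the arg-min recovers $q$. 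Your case analysis on $k<q$ versus $k>q$ establishes the stronger and more operational claim of consistency, which the paper defers to Section~\ref{sec:analysis}; what your route buys is an actual guarantee that the minimizer is $q$, while the paper's route explains why the penalty constant is exactly the degrees of freedom. Three small corrections to your write-up: (i) the difference $IC(k)-IC(q)$ for $k>q$ is $-\tfrac{n}{2\sigma^2}\sum_{i=q+1}^{k}(\ell_i-\sigma)^2 + C_n\bigl[\tfrac{(p-q)(p-q-1)}{2}-\tfrac{(p-k)(p-k-1)}{2}\bigr]$, i.e.\ the stochastic term enters with a minus sign (harmless, since it is $O_p(1)$ by Lemma~\ref{lemm:1} and is dominated by the $C_n$ term, but as written the display is wrong); (ii) the aside that ``$C_n\to 1$'' makes the $k=q$ value $O_p(\sqrt{\eta})$ contradicts your own later requirement $C_n\to\infty$ and the paper's conditions, and should be dropped; (iii) with $C_n\to\infty$ and $C_n=o(n)$ you get consistency in probability only --- the paper's \emph{strong} consistency requires $C_n/\log\log n\to\infty$, because the almost-sure fluctuation of $n(\ell_i-\sigma)^2$ for $i>q$ is of order $\log\log n$ by the law of the iterated logarithm, not $O(1)$.
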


Proof of the theorem is given in the supplementary.
We also give a simulation result which shows that Lemma~\ref{lemm:1} and Theorem~\ref{theo:criteria} 
hold true in practice.

\subsection{Krylov subspace methods}
Krylov 
subspace methods
are popularly used to compute the partial spectrum (top $k$ eigenvalues and eigenvectors) of 
matrices~\cite{Saad-book3}. Recent results~\cite{musco2015randomized} have shown that 
these methods return high quality principal components and give nearly optimal
PCA for any matrix. 
The proposed dimension estimation criterion  can be used as a stopping criterion for such 
Krylov subspace approximation
of the principal subspace of covariance matrices.

For a symmetric matrix $\mbf{A}$, the Krylov 
subspace is defined as $\mathcal{K}^m(\mbf{A},\mbf{v})=span\{\mbf{v},\mbf{Av},\ldots,\mbf{A}^{m-1}\mbf{v}\}$,
where $\mbf{v}$ is a random vector of unit norm, $\|\mbf{v}\|=1$, $\mbf{v}\nsubseteq null(\mbf{A})$   and $m$ is a scalar. 
The Lanczos algorithm builds an orthonormal basis for this Krylov subspace~\cite{Saad-book3}.
We can also define a block Krylov subspace as: 
$\mathbb{K}^m(\mbf{A},\mbf{V})=span\{\mbf{V},\mbf{AV},\ldots,\mbf{A}^{m-1}\mbf{V}\}$,
where $\mbf{V}\in\bR^{p\times k}$ is a random matrix such that $\mbf{V}\nsubseteq null(\mbf{A})$,
see~\cite{musco2015randomized} for recent theoretical results for randomized 
block Krylov subspace methods.
We can compute approximate  eigenvalues and eigenvectors of $\mbf{A}$, say $\{\theta_i,\mbf{y}_i\}_{i=1}^k$ for some $k$,
 using the Krylov subspace methods. We have the following result 
from eqn.~3 and Theorem~1 in~\cite{musco2015randomized}:

\begin{algorithm}[tb!]
\caption{Proposed Algorithm}
\label{alg:algo1}
\begin{algorithmic}
   \STATE {\bfseries Input:} Data matrix $\mbf{X}\in\bR^{p\times n}$,  noise variance $\sigma$, parameter $C_n$, and a error tolerance $\eps$.
 \STATE {\bfseries Output:} Dimension $q$ and an approximation to the principal subspace $\mbf{Y}_q$.
 \STATE Set $IC$=$zeros(p,1)$, $\mbf{Q}=[\:], k=1$, $m=\frac{\log(p)}{\sqrt{\eps}}$, $\Phi=\tfrac{1}{n^2}\|\mbf{X}\|_F^4-\tfrac{2\sigma}{n}\|\mbf{X}\|_F^2+p\sigma^2$.
\FOR{$k=1$ to $p$}
\STATE{\bfseries 1.} Generate a random  vector $\mbf{v}_k$  with $\|\mbf{v}_k\|_2=1$.
\STATE {\bfseries 2.} $\mbf{K}=\frac{1}{n}[\mbf{Xv}_k,(\mbf{XX}^T)\mbf{Xv}_k,\ldots,(\mbf{XX}^T)^{m-1}\mbf{Xv}_k]$
\STATE {\bfseries 3.}  $\mbf{Q}=orth([ \mbf{Q}, \mbf{K}])$, $\mbf{Q}=\mbf{Q}(:,1:k)$.
\STATE {\bfseries 4.} $\mbf{T}=\frac{1}{n}\mbf{Q}^T\mbf{X}\mbf{X}^T\mbf{Q}$.
\STATE {\bfseries 5.} $[\mbf{V},\Theta]=\texttt{eig}(\mbf{T})$.
\STATE {\bfseries 6.} $IC(k)=n(\Phi-\sum_{i=1}^k(\theta_i-\sigma)^2)
   -C_n\tfrac{(p-k)(p-k-1)}{2}$
   \IF{($k>1$ \&\& $IC(k)> IC(k-1)$)} 
   \STATE break;
   \ENDIF
\ENDFOR
\STATE $q=k-1$.
Output $q$ and $\mbf{Y}=\mbf{QV}$.
\end{algorithmic}
\end{algorithm}

\begin{lemma}\label{lemm:2}
 Consider a symmetric PSD matrix $\mbf{A}\in\bR^{p\times p}$ with eigenvalues $\ell_i,i=1,\ldots,p$.
Let $\{\theta_i,\mbf{y}_i\}_{i=1}^k$ be the $k$ eigenpair computed using  $m$ steps of 
block Krylov subspace method (using the orthonormal basis of $\mathbb{K}^m(\mbf{A},\mbf{V})$ for 
$\mbf{V}\in\bR^{p\times k}$). If $m=\frac{\log(p)}{\sqrt{\eps}}$ for some $0<\eps<1$, then we have
\[
 |\theta_i-\ell_i|\leq\eps\ell_{k+1},\: i=1,\ldots, k.
\]
Moreover, suppose $\mbf{Y}_k$ is a matrix containing the eigenvectors $\{\mbf{y}_i\}_{i=1}^k$ computed by the 
 Krylov subspace method as columns, then we have for $\xi\in\{2,F\}$
 \[
  \|\mbf{A}-\mbf{Y}_k\mbf{Y}_k^T\mbf{A}\|_\xi\leq (1+\eps)\|\mbf{A}-\mbf{A}_k\|_\xi,
 \]
where $\mbf{A}_k$ is the best rank $k$ approximation of $\mbf{A}$ obtained using its exact eigen-decomposition.
\end{lemma}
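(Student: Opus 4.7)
The proof would closely follow the analysis of randomized block Krylov subspace methods in Musco and Musco (2015), which is explicitly cited in the statement as the source of both bounds. The plan is to exploit the polynomial approximation structure of the Krylov subspace: since $\mathbb{K}^m(\mbf{A},\mbf{V})$ contains $p(\mbf{A})\mbf{V}$ for every univariate polynomial $p$ of degree at most $m-1$, bounds on the projection error reduce to exhibiting a single polynomial that simultaneously amplifies the top $k$ eigenvalues and suppresses the trailing $p-k$.

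First, I would construct the amplifying polynomial explicitly. The canonical choice is a shifted and rescaled Chebyshev polynomial $T_m$ designed so that the interval $[0,\ell_{k+1}]$ is mapped into $[-1,1]$ (keeping $|T_m|$ bounded by $1$ there) while any eigenvalue $\ell_i \geq (1+\eps)\ell_{k+1}$ is mapped outside. The classical Chebyshev bound $T_m(1+\eps) \geq \tfrac{1}{2}e^{2m\sqrt{\eps}}$ shows that taking $m = \log(p)/\sqrt{\eps}$ yields a gain of $\mathrm{poly}(p)$ between the two regimes. Applying this polynomial to the random start $\mbf{V}$ and combining with a concentration estimate for $\|\mbf{U}_k^T\mbf{V}\|$ (to rule out degenerate starts with high probability) shows that some $k$-dimensional subspace inside the block Krylov space has principal angles to $\mathrm{span}(\mbf{U}_k)$ that are controlled by $O(\eps)$.

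Next, for the eigenvalue bound $|\theta_i - \ell_i| \leq \eps\ell_{k+1}$ I would invoke the Courant--Fischer min--max characterization on the Rayleigh--Ritz projection of $\mbf{A}$ onto the Krylov basis $\mbf{Q}$: the Ritz values $\theta_i$ interlace with $\ell_i$ from below, and the lower bound comes from plugging in the near-invariant subspace constructed above as a test subspace. For the projection bound $\|\mbf{A} - \mbf{Y}_k\mbf{Y}_k^T\mbf{A}\|_\xi \leq (1+\eps)\|\mbf{A}-\mbf{A}_k\|_\xi$, the spectral-norm case $\xi = 2$ follows directly from the angle bound, since the residual in directions orthogonal to $\mathrm{span}(\mbf{Y}_k)$ is dominated by $\ell_{k+1}$ plus a small leakage of size $\eps\ell_{k+1}$. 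The Frobenius case needs a more delicate argument using trace inequalities to sum the per-direction errors without losing dimension factors.

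The main obstacle is precisely this Frobenius per-vector guarantee: spectral-norm bounds for block Lanczos are classical and already present in the Saad reference, but obtaining the $(1+\eps)$ factor uniformly in both $\xi=2$ and $\xi=F$ with only $m = O(\log(p)/\sqrt{\eps})$ iterations is the key contribution of Musco--Musco and requires showing that the Rayleigh--Ritz step preserves the residual tail spectrum in addition to capturing the top part. I would reproduce their trace-inequality argument rather than reinvent it, since the Chebyshev polynomial gain derived in the second paragraph is exactly what their reduction consumes.
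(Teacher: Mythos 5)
The paper gives no proof of this lemma at all---it imports the result verbatim as ``eqn.~3 and Theorem~1'' of Musco and Musco (2015)---and your sketch is a faithful outline of exactly that cited argument (Chebyshev amplification of the top eigenvalues inside the block Krylov space, Courant--Fischer/interlacing for the Ritz values, and the trace-inequality reduction for the Frobenius-norm case), so it is consistent with, and indeed fills in, the paper's approach. The one point worth adding is that the guarantee is probabilistic over the random starting block $\mbf{V}$ (and your Chebyshev constant should be $T_m(1+\eps)\geq\tfrac{1}{2}e^{m\sqrt{2\eps}}$ rather than $\tfrac{1}{2}e^{2m\sqrt{\eps}}$, which changes nothing asymptotically); the paper's statement silently drops the high-probability qualifier, and your concentration step for $\|\mbf{U}_k^T\mbf{V}\|$ correctly reintroduces it.
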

Therefore,  the Krylov subspace method will return a high quality principal components of $\mbf{S}_n$
and near optimal $(1+\eps)$ PCA. In addition, 
the eigenvalues $\theta_i$'s computed are very close to the actual eigenvalues $\ell_i$s of the sample
 covariance matrix (within a multiplicative factor).
 The error $\eps$ in the above analysis is related to the gap in the spectrum, i.e., 
we can replace $\eps$ by
$\frac{\ell_{k}}{\ell_{k+1}}-1$, see~\cite[\S 7]{musco2015randomized}. 
For $k>q$, the error term $\eps\ell_{k+1}$ is related to the
noise related eigenvalues and we have $\eps\ell_{k+1}=O(\frac{1}{\sqrt{n}})$
from the analysis in section~\ref{sec:derive} and \cite{anderson1963asymptotic}.
Asymptotically, this term goes to zero.
Thus, $\theta_i$'s have the same statistical properties of $\ell_i$'s, and are good approximation to them.
Since $\ell_i$'s are asymptotically equivalent to $\lambda_i$'s, $\theta_i$'s are good estimates of 
$\lambda_i$'s.

\paragraph{Proposed Algorithm:}
Algorithm~\ref{alg:algo1} presents the proposed algorithm for simultaneously estimating the dimension  and computing the principal subspace of the covariance matrix.
In step 2, note that only matrix-vector products with the data $\mbf{X}$ and its transpose are needed to 
form the Krylov matrix $\mbf{K}$. In step  3, since $\mbf{Q}$ is already orthonormal from the previous iteration,
the new vectors in $\mbf{K}$
can be quickly orthonormalized wrt. $\mbf{Q}$.
We can also replace steps 2-5, by a version of the Lanczos algorithm~\cite{Saad-book3}, 
which updates the previous subspace $\mbf{Q}$ and the tridiagonal matrix $\mbf{T}$. 

{\bf \emph{Cost:}} If $q$ is the exact dimension, the computational cost of the algorithm will be
$O(\nnz(\mbf{X})qm+p(qm)^2)$, where $\nnz(\mbf{X})$ is the number of nonzeros in $\mbf{X}$. 
Since both $q\ll p$ and $m=\frac{\log(p)}{\sqrt{\eps}}$ are small, the algorithm is quite inexpensive, more so 
if  data $\mbf{X}$ is sparse.

{\bf\emph{Choosing $\sigma$:}}
 In our Algorithm,
 we need to choose  the noise level $\sigma$, 
  when it is unknown. 
 In many applications, e.g., in signal processing, 
 typically an estimate of noise level is known.
 In  low rank approximation problems, the maximum approximation error tolerance
  acceptable might be known.
 Otherwise, for signal processing applications, $\sigma$ can be 
 determined using the thresholding method proposed in~\cite{kritchman2009non}.
 For data related applications, article~\cite{ubaru2017fast} discusses an inexpensive method 
 to estimate $\sigma$ using the spectral density plot of the data matrix.
 For further details, see~\cite{ubaru2016fast,ubaru2017fast}.

\section{Analysis}\label{sec:analysis}
In this section, we first show that the proposed method yields
a  strong consistent estimator for $q$, the exact dimension.
We then analyze 
the conditions for correct estimation for finite $n$ data observations.

\subsection{Strong consistency}
\begin{theorem}\label{theo:1}
 The criterion defined by 
 \begin{equation}
  IC(k)=\frac{n}{2\sigma^2}\sum_{i=k+1}^p(\ell_i-\sigma)^2
   -C_n\frac{(p-k)(p-k-1)}{2}
 \end{equation}
can be used to obtain a strong consistent estimator for $q$, the exact dimension of the
principal subspace, i.e.,
$\lim_{n\rightarrow\infty}\hat{k}=q$, where $\hat{k}= \arg\min_k IC(k)$,
with value of $C_n$ such that 
\[\lim_{n\rightarrow\infty}\frac{C_n}{n}=0
\text{ and } \lim_{n\rightarrow\infty}\frac{C_n}{\log\log n}=\infty.
\]
\end{theorem}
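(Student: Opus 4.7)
The plan is to establish strong consistency by showing that almost surely, for all $n$ sufficiently large, the criterion $IC(k)$ attains its unique minimum at $k=q$. Following the standard template for model selection consistency, I would split the argument into two complementary cases: underestimation ($k<q$) and overestimation ($k>q$), and in each case analyze the sign and order of growth of the difference $IC(k)-IC(q)$.

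For the underestimation case ($k<q$), I would write
\[
IC(k)-IC(q)=\frac{n}{2\sigma^2}\sum_{i=k+1}^{q}(\ell_i-\sigma)^2 \;-\; C_n\,\Delta(k,q),
\]
where $\Delta(k,q)=\tfrac{1}{2}[(p-k)(p-k-1)-(p-q)(p-q-1)]>0$ is a fixed positive constant. By Proposition~\ref{prop:2} and standard strong consistency of sample eigenvalues (which follows from Proposition~\ref{prop:1} combined with the Weyl/Davis--Kahan inequalities), $\ell_i\to\lambda_i$ almost surely, and since $\lambda_i>\sigma$ for $i\le q$ (by the model assumption), each term $(\ell_i-\sigma)^2$ converges almost surely to a strictly positive limit. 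Hence the first term grows like $\Theta(n)$, while the penalty grows like $C_n=o(n)$ by assumption. Therefore $IC(k)-IC(q)\to+\infty$ almost surely, ruling out underestimation.

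For the overestimation case ($k>q$), the difference becomes
\[
IC(k)-IC(q)=-\frac{n}{2\sigma^2}\sum_{i=q+1}^{k}(\ell_i-\sigma)^2 \;+\; C_n\,\Delta(q,k),
\]
with $\Delta(q,k)>0$. Here I would invoke Lemma~\ref{lemm:1} together with a law-of-iterated-logarithm refinement: for $i>q$ the population eigenvalue equals $\sigma$, so $\sqrt{n}(\ell_i-\sigma)$ is asymptotically Gaussian by Proposition~\ref{prop:3}, and the LIL for quadratic forms in sample covariance eigenvalues gives $n(\ell_i-\sigma)^2=O(\log\log n)$ almost surely. Summing over the finite index range $i=q+1,\dots,k$ preserves this order, so the first term is $O(\log\log n)$ a.s. Since $C_n/\log\log n\to\infty$ and $\Delta(q,k)$ is a strictly positive constant, the penalty term dominates and $IC(k)-IC(q)\to+\infty$ almost surely.

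Combining the two cases, with probability one there exists $N$ such that for all $n\ge N$, $IC(q)<IC(k)$ for every $k\ne q$, so $\hat k=q$ eventually and strong consistency follows. The main obstacle is the overestimation direction: the linear growth from $n$ must be tamed by the LIL so that the penalty calibrated to $\log\log n$ can beat the fluctuations of the noise eigenvalues. The two constraints on $C_n$ in the theorem are precisely what is needed to balance these two regimes — $C_n=o(n)$ prevents oversmoothing (underestimation), and $C_n/\log\log n\to\infty$ prevents undersmoothing (overestimation) — and I would note that classical choices like $C_n=c\log n$ satisfy both.
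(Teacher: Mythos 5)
Your proposal is correct and follows essentially the same route as the paper's proof: split into underestimation and overestimation, use the law of the iterated logarithm to get $\ell_i=\lambda_i+O\bigl(\sqrt{\log\log n/n}\bigr)$ a.s., note that for $k<q$ the residual term grows like $\Theta(n)$ against a penalty of order $o(n)$, and for $k>q$ the residual is $O(\log\log n)$ against a penalty growing faster than $\log\log n$. The only cosmetic differences are that you compare $IC(k)-IC(q)$ directly for each fixed $k\neq q$ (and conclude via finiteness of the candidate set), whereas the paper normalizes the differences by $n$ and by $C_n$ respectively before taking limits.
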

Proof of this theorem is given in the supplementary.
For the right choice of $C_n$, the proposed estimator is strongly consistent.
Next, we consider the eigenvalues computed using the Krylov subspace method
in our criterion. 

\begin{corollary}\label{corr:2}
 For the choice of $C_n$ in Theorem~\ref{theo:1}, the criterion~\ref{eq:crit} 
 is strongly consistent for the 
 eigenvalues computed using the Krylov subspace method in Algorithm~\ref{alg:algo1}
 if we set the parameter $\sigma=(1-\eps)\sigma_{true}$ in the algorithm, where
 $\sigma_{true}$ is the true noise variance of the data.
\end{corollary}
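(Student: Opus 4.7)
The plan is to reduce the corollary to Theorem~\ref{theo:1} by controlling the perturbation introduced when the exact sample eigenvalues $\ell_i$ are replaced by the Krylov Ritz values $\theta_i$, with the shift $\sigma=(1-\eps)\sigma_{true}$ absorbing the systematic downward bias of the Ritz values on the noise-level eigenvalues. First, I would invoke the one-sided form of Lemma~\ref{lemm:2}: the top-$k$ Ritz values satisfy $\theta_i\le\ell_i$ together with $|\theta_i-\ell_i|\le\eps\ell_{k+1}$, so $\theta_i$ lies in $[\ell_i-\eps\ell_{k+1},\,\ell_i]$. Since the noise-level sample eigenvalues $\ell_i$ for $i>q$ converge almost surely to $\sigma_{true}$ with $O(1/\sqrt{n})$ Wishart fluctuations (as used in Lemma~\ref{lemm:1}), the choice $\sigma=(1-\eps)\sigma_{true}$ places the noise-level Ritz values on the non-negative side of $\sigma$ with $\theta_i-\sigma\in[0,\eps\sigma_{true}]+O_p(1/\sqrt{n})$; the negative-bias interval $[-\eps\ell_{k+1},0]$ of $\theta_i-\ell_i$ is thereby exactly offset by the same $\eps\sigma_{true}$ shift.

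Second, I would replay the two-regime case analysis used for Theorem~\ref{theo:1}. For $k<q$ one has $\theta_{k+1}-\sigma\ge(1-\eps)\ell_{k+1}-(1-\eps)\sigma_{true}\to(1-\eps)(\lambda_{k+1}-\sigma_{true})$, which is bounded below by a positive constant almost surely, so the first term $\tfrac{n}{2\sigma^2}(\theta_{k+1}-\sigma)^2$ grows linearly in $n$ and forces $IC(k)-IC(q)\to+\infty$, dominating the $o(n)$ penalty difference. For $k>q$, one needs the penalty gain $C_n(p-q-1)$ to dominate the first-term change from including an extra noise-level Ritz value; by the previous paragraph this change is at most the sum of an $O(1)$ chi-squared-type fluctuation inherited from Lemma~\ref{lemm:1} and an $O(n\eps^2)$ Krylov perturbation, and the growth assumption $C_n/n\to 0$, $C_n/\log\log n\to\infty$ from Theorem~\ref{theo:1} gives $IC(k)-IC(q)>0$ eventually almost surely once $n\eps^2=o(C_n)$.

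The main obstacle is precisely this domination step for $k>q$: the naive Ritz estimate $(\theta_i-\sigma)^2\le\eps^2\sigma_{true}^2$ yields an $O(n\eps^2)$ contribution that does not vanish for fixed $\eps>0$, so the penalty alone cannot overcome it without refinement. I would therefore devote most of the proof to sharpening this estimate by decomposing $\theta_i-\sigma=(\theta_i-(1-\eps)\ell_i)+(1-\eps)(\ell_i-\sigma_{true})$ and using the Lanczos interlacing structure of the Ritz values on the near-degenerate noise spectrum to show that the first summand is actually $o_p(1)$ (beyond the naive $\eps\sigma_{true}$ slack), while the second summand is $O_p(1/\sqrt{n})$ by standard Wishart theory. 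Once this sharpened $o_p(1)$ bound on $\theta_i-\sigma$ for $i>q$ is established, the Borel--Cantelli-type argument behind Theorem~\ref{theo:1} transfers verbatim to the Krylov criterion and delivers $\hat k\to q$ almost surely.
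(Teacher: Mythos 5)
Your overall route is the same as the paper's: reduce to Theorem~\ref{theo:1} via the two-sided Ritz-value bound $\ell_i-\eps\ell_{k+1}\le\theta_i\le\ell_i$ from Lemma~\ref{lemm:2}, note that the underestimation case is still driven by the signal eigenvalues, and use the shift $\sigma=(1-\eps)\sigma_{true}$ to cancel the Krylov bias on the noise-level eigenvalues in the overestimation case. You have also correctly located the crux: for $k>q$ the crude bound $(\theta_i-\sigma)^2\le\eps^2\sigma_{true}^2$ produces an $O(n\eps^2)$ term that the penalty $C_n=o(n)$ cannot absorb for fixed $\eps$.

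However, the step you propose to close this gap is not established, and as stated it is doubtful. You claim that $\theta_i-(1-\eps)\ell_i=o_p(1)$ for the noise-level Ritz values, i.e.\ that $\theta_i$ sits at the \emph{lower} endpoint $\ell_i-\eps\ell_{k+1}\approx(1-\eps)\sigma_{true}$ of the interval guaranteed by Lemma~\ref{lemm:2}. Nothing in that lemma forces this: it only gives containment in $[\ell_i-\eps\ell_{k+1},\,\ell_i]$, and for a tightly clustered noise spectrum the typical behaviour of Lanczos/block-Krylov Ritz values is to approximate the eigenvalues $\ell_i\approx\sigma_{true}$ themselves, i.e.\ to sit near the \emph{upper} endpoint --- in which case $\theta_i-\sigma\approx\eps\sigma_{true}$ and the $O(n\eps^2)$ term survives. ``Lanczos interlacing'' gives $\theta_i\le\ell_i$, not a matching saturation of the lower bound, so the promised $o_p(1)$ refinement would require a genuinely new argument (or an extra assumption) that you do not supply. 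For what it is worth, the paper's own proof of this corollary takes the same shortcut: in the overestimation case it substitutes the lower endpoint $\ell_i-\eps\ell_{k_1+1}$ for $\theta_i$ and observes the cancellation with $(1-\eps)\sigma_{true}$, which is exactly the unproven saturation your sketch would need. So your reduction and your diagnosis match the paper, but the proof is incomplete at precisely the point you flag, and the repair you outline would not go through as described.
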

The proof can be found in the supplementary.
Next, we analyze the performance of the proposed method for finite sample size and obtain the
conditions for  correct detection.
  
\subsection{Performance Analysis}
The consistency analysis above considered the asymptotic case when $n\rightarrow\infty$, and the
 law of iterated logarithm~\cite{muirhead2009aspects} is used to derive the results. 
 Here, we  analyze the performance of the
 proposed method for finite sample size
 (general $n$), and obtain the conditions when the method either \emph{underestimates} or 
 \emph{overestimates} the dimension. 
 
      \begin{figure*}[tb!]
    \begin{center}
\includegraphics[width=0.32\textwidth]{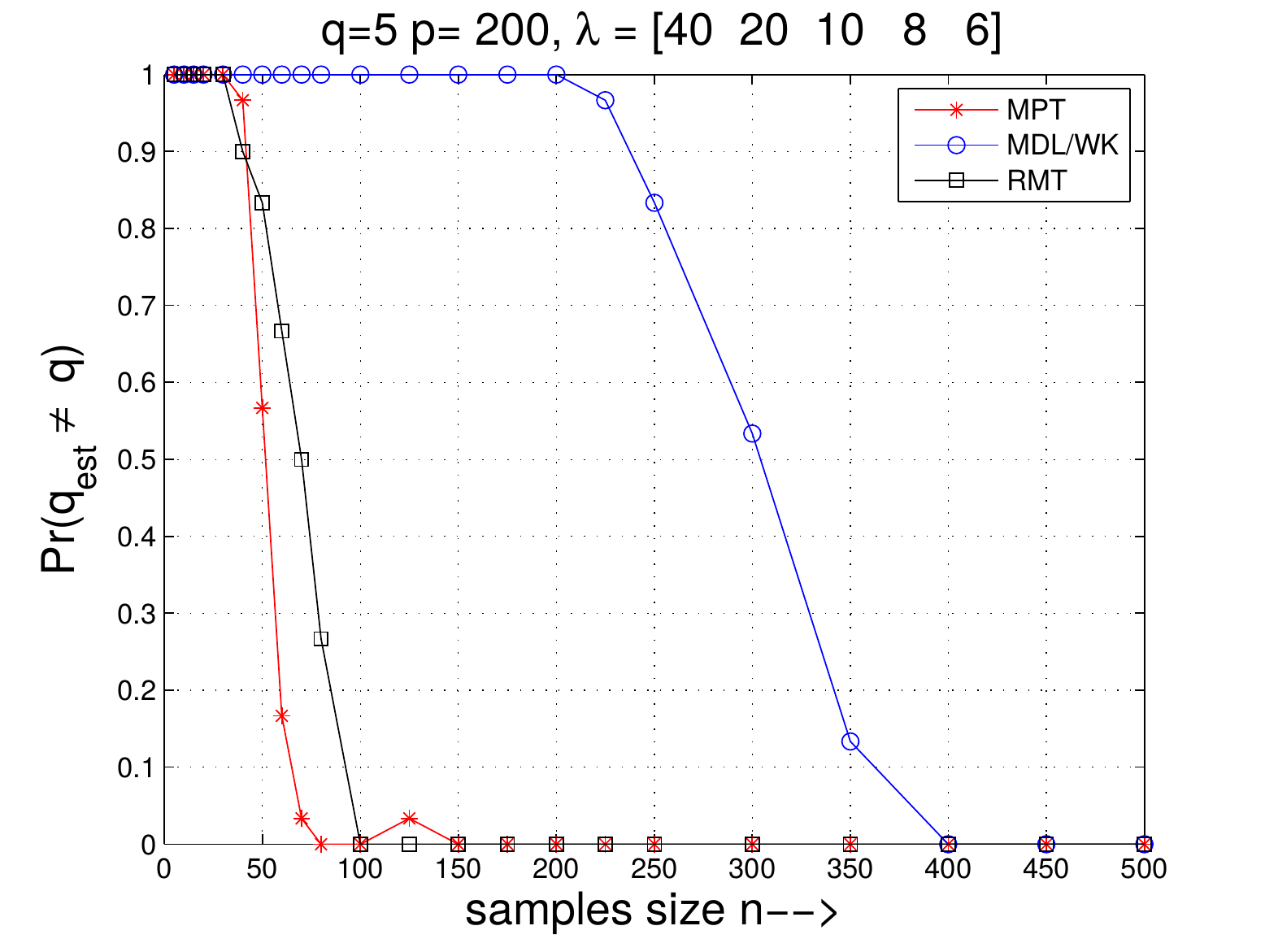}
\includegraphics[width=0.32\textwidth]{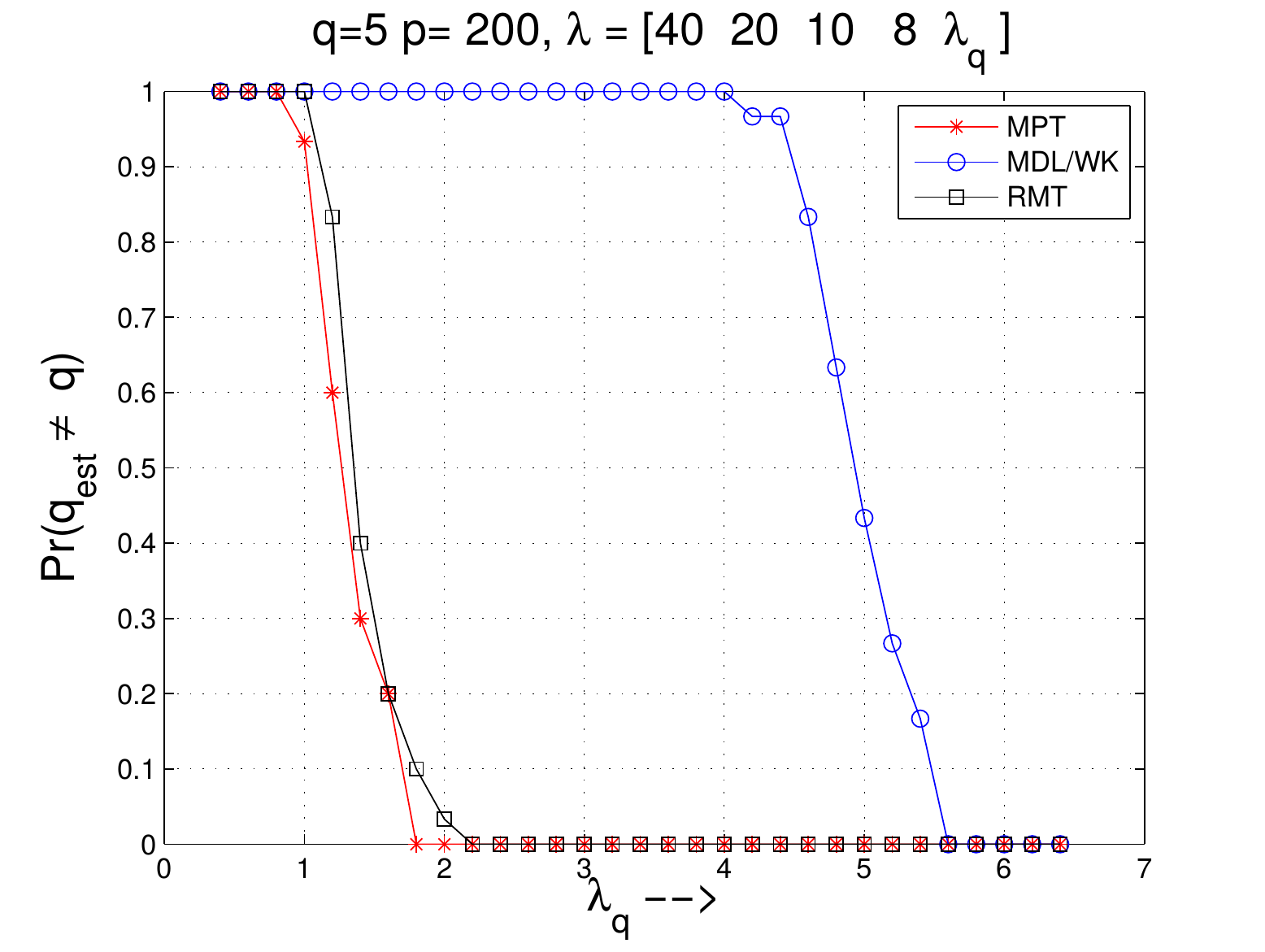}
\includegraphics[width=0.32\textwidth]{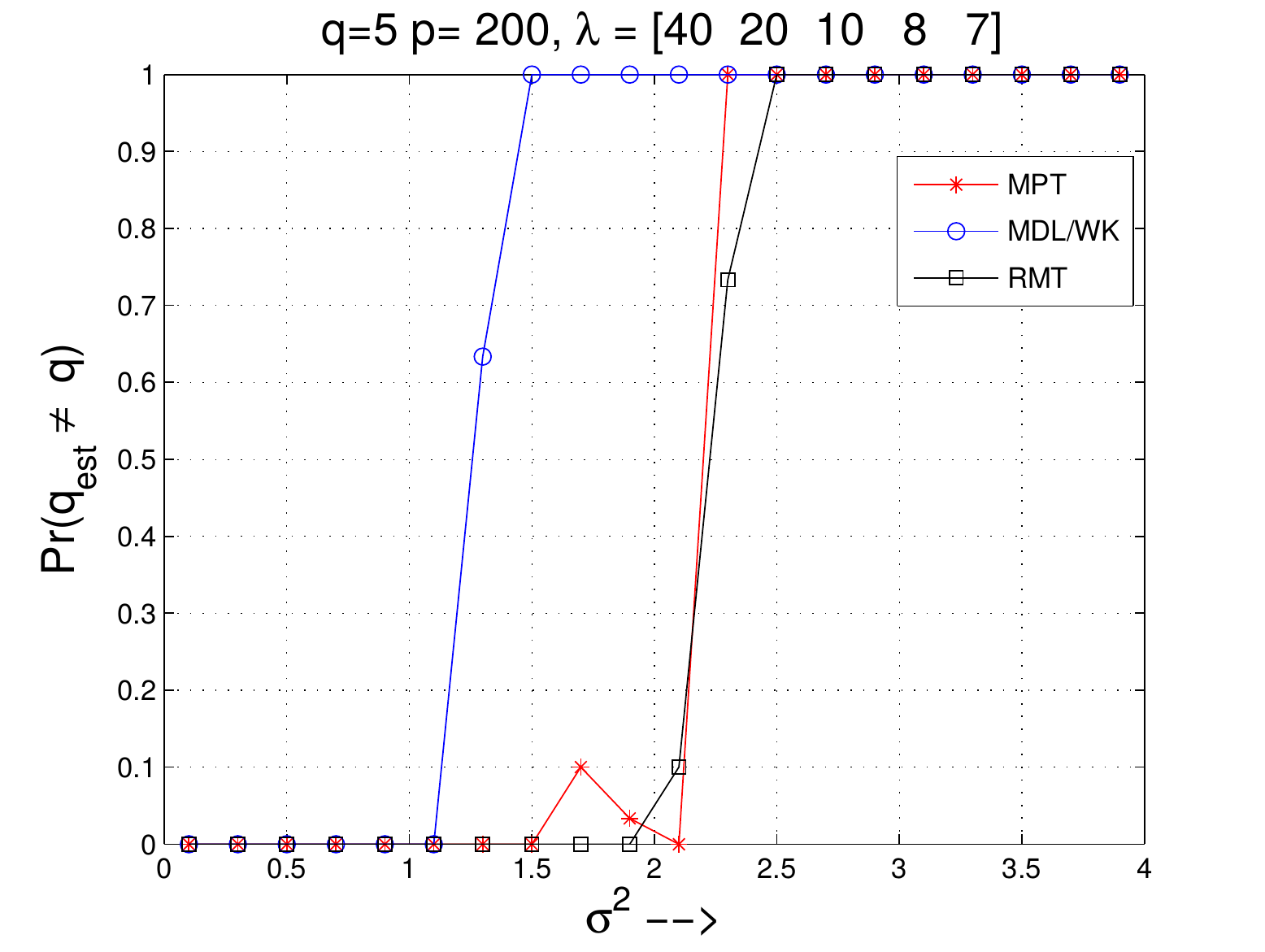}
\vskip -0.1in
 \caption{Signal detection: Comparison between the proposed method MPT, RMT and MDL 
 as a function of: (left) the number of samples $n$, (middle)
 signal strength 
 ($\lambda_q$ eigenvalue), and (right) the noise level $\sigma$.}\label{fig:1}
 \end{center}
  \vskip -0.2in
\end{figure*} 

 The notorious scenario for wrong detection is when the dimension is off by exactly one $(q\pm 1)$, which we 
 analyze here (important in signal detection applications). The analysis trivially generalizes to  other cases.
 First, let us consider underestimation by one, and consider the following difference:
 \begin{eqnarray*}
  \Delta_1 & = & IC(q-1) -IC(q)\\
  &=& \frac{n}{2\sigma^2}(\ell_q-\sigma)^2 - C_n(p-q).
 \end{eqnarray*}
 Note that we will not have underestimation when $\Delta_1>0$, i.e., when 
 \[
  \ell_q >\sigma\left(\sqrt{\tfrac{2C_n}{n}(p-q)}+1\right).
 \]
So,  we need 
 the magnitude of $\ell_q$ (related to relevant data or the signal strength) to be 
large enough in order  to avoid underestimate the dimension.
That is, we need a reasonable gap between relevant eigenvalues and the noise related eigenvalues in the spectrum.
For the asymptotic case ($n\rightarrow\infty$), 
we know that the RHS term with $C_n$ goes to zero and, hence we will not have any underestimation 
of  dimension as long as the signal strength is 
more than the noise variance.

 Next, let us consider overestimation of the dimension by one, and  the following difference:
  \begin{eqnarray*}
  \Delta_2 & = & IC(q+1) -IC(q)\\
  &=&  C_n(p-q-1)- \frac{n}{2\sigma^2}(\ell_{q+1}-\sigma)^2.
 \end{eqnarray*}
 Again, we will not overestimate if $\Delta_2>0$, i.e., when 
 \[
 \frac{\ell_{q+1}}{\sigma}<\sqrt{\tfrac{C_n}{n}(p-q-1)}+1.
 \]
 We know that $\ell_{q+1}$ corresponds to the largest noise related eigenvalue of the covariance matrix.
 For the asymptotic case ($n\rightarrow\infty$), we know $\ell_{q+1}\rightarrow\sigma$, hence
 the equation holds.
 For finite $n$, we must choose the noise parameter $\sigma$ close to the true noise level 
 (reflected in $\ell_{q+1}$) in order to avoid overestimation.
 Assuming the noise variance $\sigma$ is known, for finite $n$, 
 when the ratio of $p/n$ or $n/p$ is not too large,
 we can derive bounds on the parameter $C_n$ in our method to avoid overestimation, 
 using the  random matrix theory 
 results in~\cite{johansson2000shape,johnstone2001distribution}.
 
 The largest eigenvalue of the sample covariance matrix (Wishart matrix) of
pure noise vectors with Gaussian distribution  follows the Tracy-Widom 
distribution~\cite{johansson2000shape,johnstone2001distribution}.
Then, for finite $p,n$ as long as $\min\{p,n\}\gg1$ and the ratio of $p/n$ or $n/p$ is not too large, 
the  largest eigenvalue due to noise will be  approximately $\sigma(1+\sqrt{p/n})^2$, 
see~\cite{kritchman2009non} for details.
Hence, for finite  but large values of $p,n$, we have 
\[
 \ell_{q+1}\approx \sigma\left(1+\sqrt{\frac{p}{n}}\right)^2.
\]
Substituting in the condition above for overestimation, we get the following bound for the parameter $C_n$
for exact detection for finite but large values of $p,n$:
\[
 C_n>\frac{(p+2\sqrt{np})^2}{n(p-q-1)}.
\]
When the ratio of $p/n$ or $n/p$ is not too large, the RHS is fairly small.
The above analysis provides us the conditions on the  relevant eigenvalue $\ell_q$,
noise level and the parameter $C_n$
in order to avoid incorrect estimation of the dimension $q$ using the proposed method.

When we consider the eigenvalues obtained by the Krylov subspace method 
in the criterion, we will have an additional term that depends on $\eps$
in the denominators of the above conditions. That is, we have approximately 
the following conditions for exact dimension detection:
\[
\ell_q >\frac{\sigma}{(1-\eps)}\left(\sqrt{\tfrac{2C_n}{n}(p-q)}+1\right)\text{ and } \]
\[\frac{\ell_{q+1}}{\sigma}<\frac{1}{(1-\eps)}\sqrt{\tfrac{C_n}{n}(p-q-1)}+1.
\]
For small $\eps$, we end up with similar conditions on  $\ell_q$, noise level and $C_n$ as above.

\section{Numerical experiments}\label{sec:expt}
In this section, we present some numerical experimental results to illustrate the 
performance of the proposed method, and compare it to few other popular methods.
First, we consider examples for the number of signals detection application in signal and array processing.
We then consider few large data matrices and a PCA application to illustrate the method's performance.

\begin{table*}
\caption{Performance of the Krylov Subspace method, Algorithm~\ref{alg:algo1} with $m=10$.}
\label{tab:1}
\begin{center}
\begin{tabular}{|l|c|c|c|c|c|c|c|}
\hline
Dataset&$p$ & Actual $q$&$\lambda_q$ &$\sigma$ &Estimated $\tilde{q}$& 
$\|\mbf{A}-\mbf{Y}_{\tilde{q}}\mbf{Y}_{\tilde{q}}^T\mbf{A}\|_F$ & Runtime\\
\hline
sprand& 5000 &50&5&1&50&134.47&6.1 secs\\
      & 5000 &100&2&0.5&100&159.23&22.8 secs\\
      & 10000 &100&2&0.5&100&162.52&72.5 secs\\
      & 40000 &100&2&0.5&100&183.74&101.6 secs\\
       & 100000 &100&2&0.5&100&210.86&192.1 secs\\
  \hline
Harvard& 500&63&2.6&1&69&36.14&0.24 secs\\
lpiceria3d& 3576&108&5&1&104&140.52&0.68 secs\\
EVA& 8497&165&5.2&1&172&81.47&2.90 secs\\
lpstocfor3& 16675&981&23.7&3&981&3.05e4&2.29 secs\\
as-22july& 22963&241&54.6&10&237&311.23&137.4 secs\\
internet& 124651&--&--&1&351&7.49e3&797.8 secs\\
\hline
\end{tabular}
\end{center}
  \vskip -0.2in
\end{table*}

\subsection{Number of signals detection}
In the first set of experiments, we consider the signal detection problem to illustrate the accuracy of the proposed method for dimension estimation (exact detection is desired in this application). The results and observations from these experiments are applicable for general data too, see supplementary.
We consider $p$ dimensional signals $\mbf{x}_i$'s that are 
corrupted by white noise with $\mathcal{N}(0,\sigma\mbf{I})$,
 variance $\sigma$. There are three parameters in this model, 
 namely the number of samples $n$, 
 the  signal strength or the magnitude of the eigenvalue
 $\lambda_q$, and the noise level $\sigma$. We compare the performances of the proposed method,
  the MDL (Minimum Description Length) method proposed in~\cite{wax1985detection}, 
and the `state of the  art' hypothesis testing method proposed 
 in~\cite{kritchman2009non} based on random matrix theory (RMT)
 for signal detection 
 as a function of these three parameters. 
 In all experiments, we set $C_n=\log n$ to ensure that the asymptotic properties  and the finite sample lower bound  on $C_n$  above 
 hold.
 
 Figure~\ref{fig:1} presents three results for the three methods,
 the proposed matrix perturbation theory (MPT) based method, the MDL method and the 
  random matrix theory (RMT) based hypothesis testing method.
 For a chosen signal dimension $p$ (reported in the plot), 
 we generate the signals and the sample covariance matrix based on the 
 considered signal eigenvalues $\lambda$ (listed in the plot).
 We then add noise covariance matrix corresponding to the 
 noise level $\sigma$ considered. 
 We plot the probability of the estimated rank $q_{est}$ being not equal to the 
 actual rank $q$, i.e., $Pr(q_{est}\neq q)$ over 100 trials. 
 In the first plot of Fig.~\ref{fig:1}, we plot $Pr(q_{est}\neq q)$
 as a function of the number of samples $n$. We consider small signal dimension  $p=200$
  (note that MDL and RMT require complete eigen-decomposition), 
 the actual rank $q=5$ and 
 the noise level $\sigma=1.1$. The eigenvalues corresponding to the signals are given in the plot. 
 We note that MDL requires $n\geq p$ to yield exact rank, where as 
 the proposed method MPT yields exact rank for much smaller sample size, and 
 performs even slightly better than 
 the state of the art method RMT which requires all the eigenvalues of the sample covariance matrix. 
 
 In the second (middle) plot, we compare the performances wrt. the signal strength, 
 i.e., the magnitude of the $q$th eigenvalue $\lambda_q$ of the covariance matrix. 
 Again the 
 signal dimension is $p=200$, the actual rank $q=5$ and 
 the noise level $\sigma=1.1$. The number of samples is $n=400$. 
 We note that, the proposed method again outperforms MDL and yields more accurate results
 for much lower signal strength. In the last plot, we compare the 
 performances with respect to the noise level $\sigma$. Here too, the 
 signal dimension is $p=200$, the actual rank $q=5$ and 
 the  number of samples is $n=400$. The signal eigenvalues
 are given in the plot and the signal strength
 $\lambda_q=6$. The proposed method MPT performs better 
 than MDL wrt. the noise level too and performs was well as RMT.
 RMT requires parameters, 
such as confidence level $\alpha$ to be selected. 
More importantly, both MDL and RMT require  computing all the
eigenvalues of the sample covariance
matrix. 
Results for our algorithm~\ref{alg:algo1}
are reported in the supplementary.
\begin{figure*}[t!] 
\begin{center}
\begin{tabular}{cc|cc}
\includegraphics[width=0.2\textwidth,trim={0.5cm 0.5cm 0.5cm 0.5cm}]{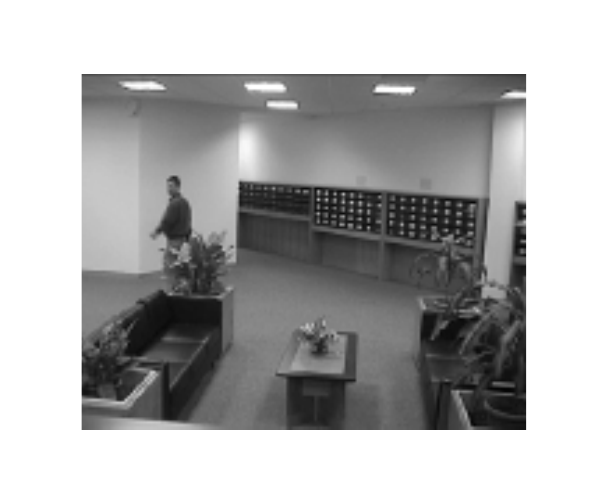} &
\includegraphics[width=0.2\textwidth,trim={0.5cm 0.5cm 0.5cm 0.5cm}]{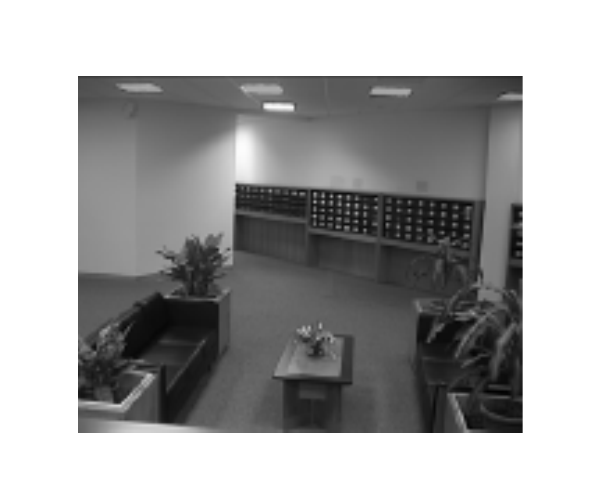} &
\includegraphics[width=0.24\textwidth,trim={0.5cm 0.5cm 0.5cm 0.5cm}]{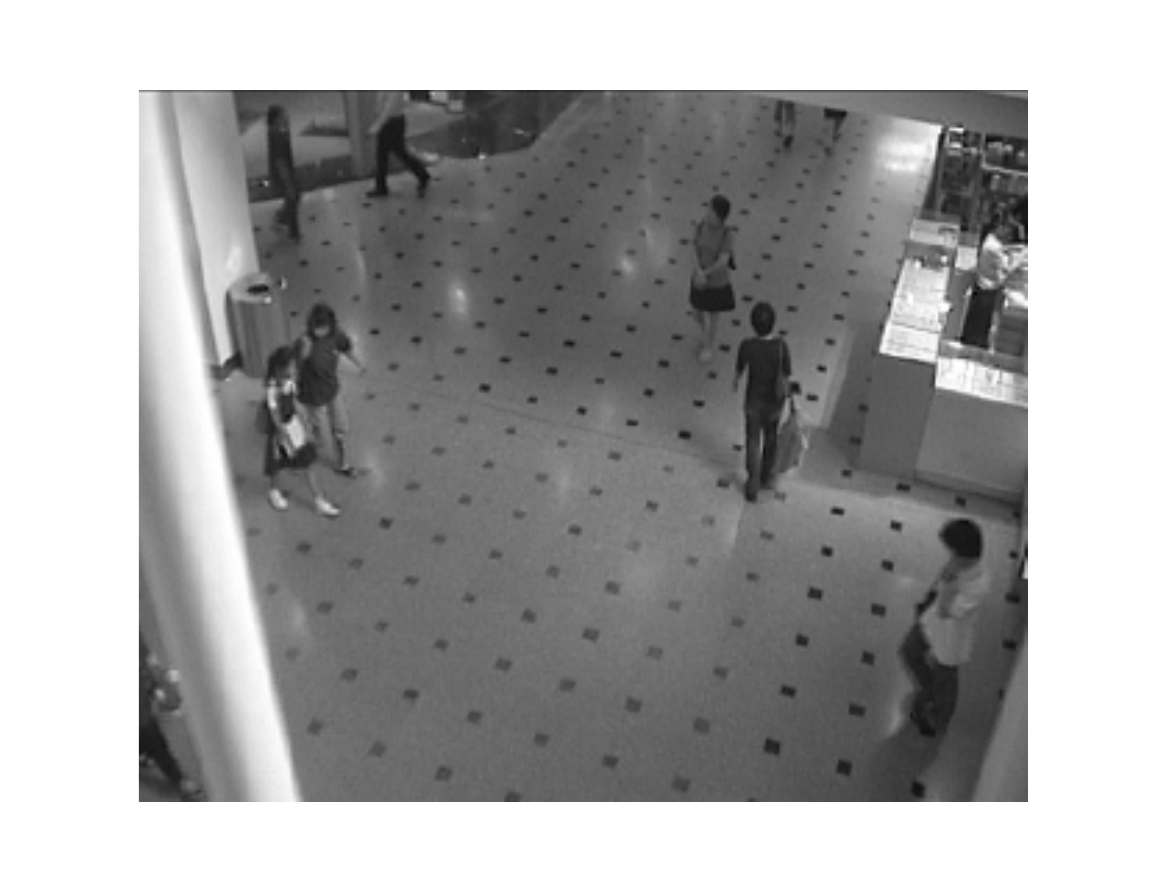} &
\includegraphics[width=0.24\textwidth,trim={0.5cm 0.5cm 0.5cm 0.5cm}]{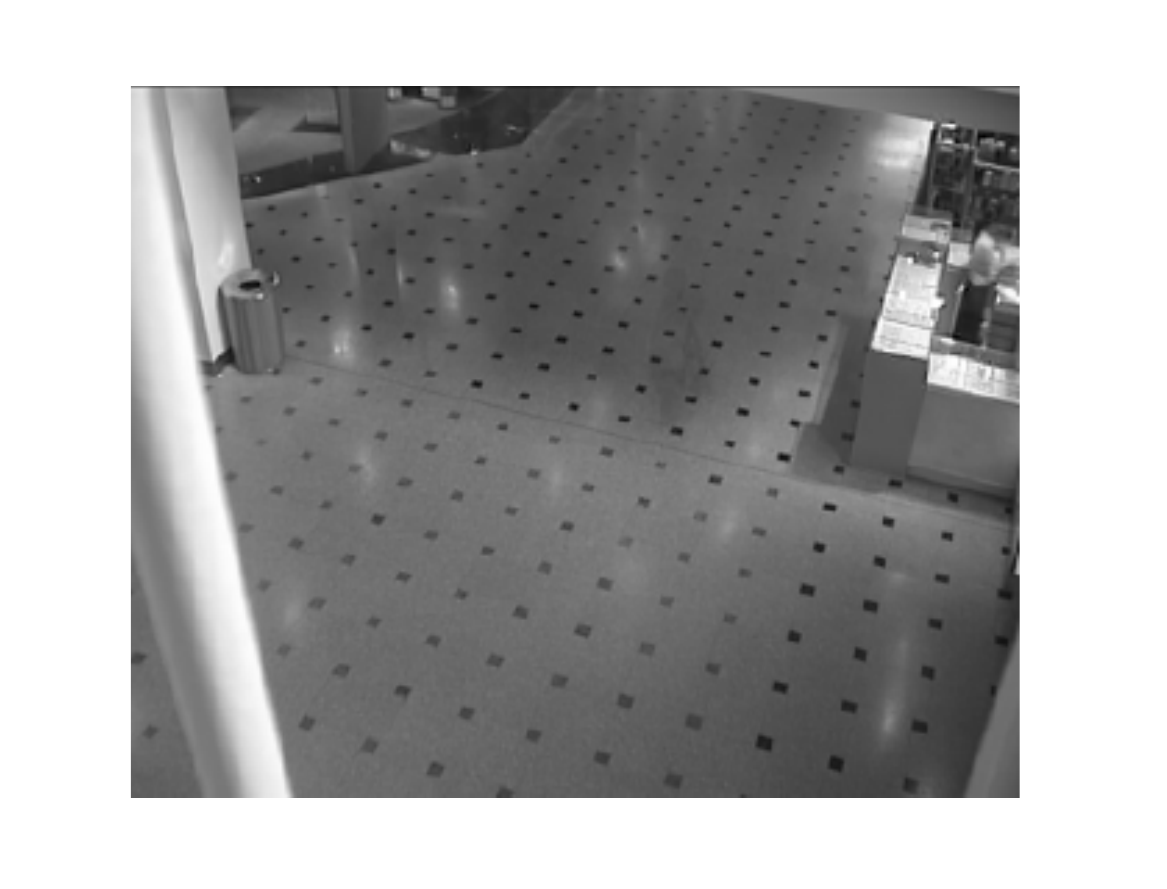} \\
\includegraphics[width=0.2\textwidth,trim={0.5cm 0.5cm 0.5cm 0.5cm}]{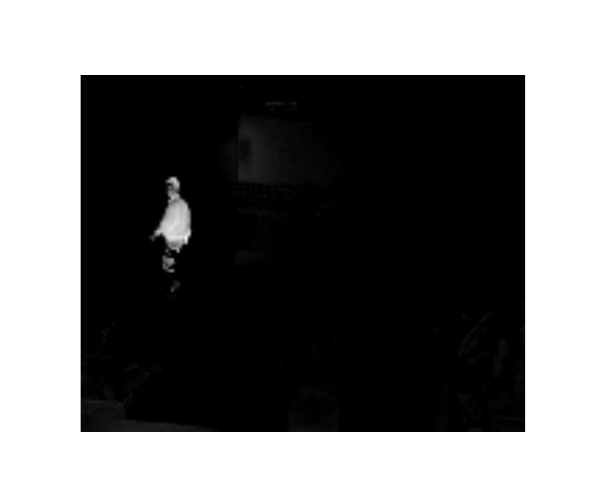} &
\includegraphics[width=0.2\textwidth,trim={0.5cm 0.5cm 0.5cm 0.5cm}]{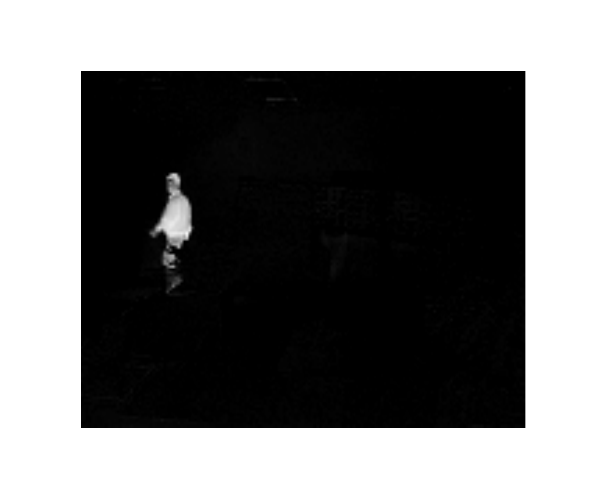} &
\includegraphics[width=0.24\textwidth,trim={0.5cm 0.5cm 0.5cm 0.5cm}]{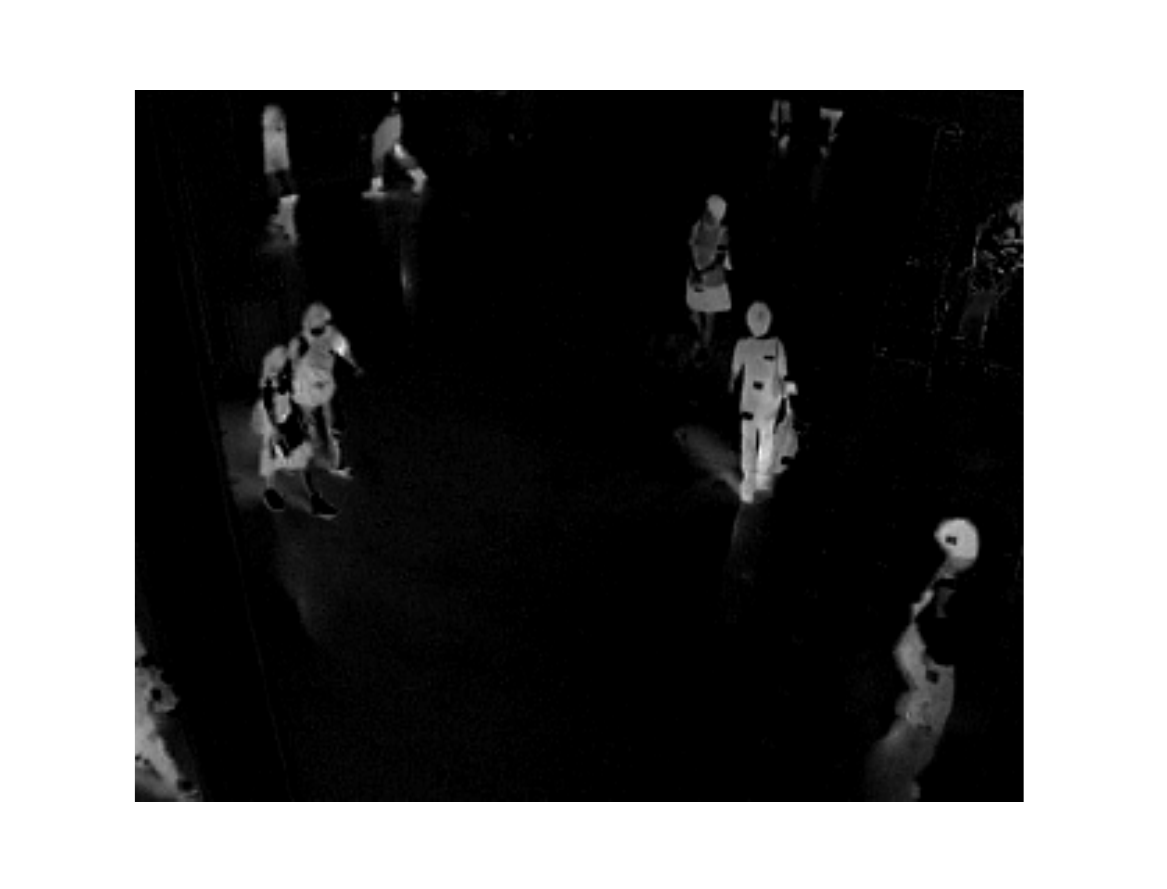} &
\includegraphics[width=0.24\textwidth,trim={0.5cm 0.5cm 0.5cm 0.5cm}]{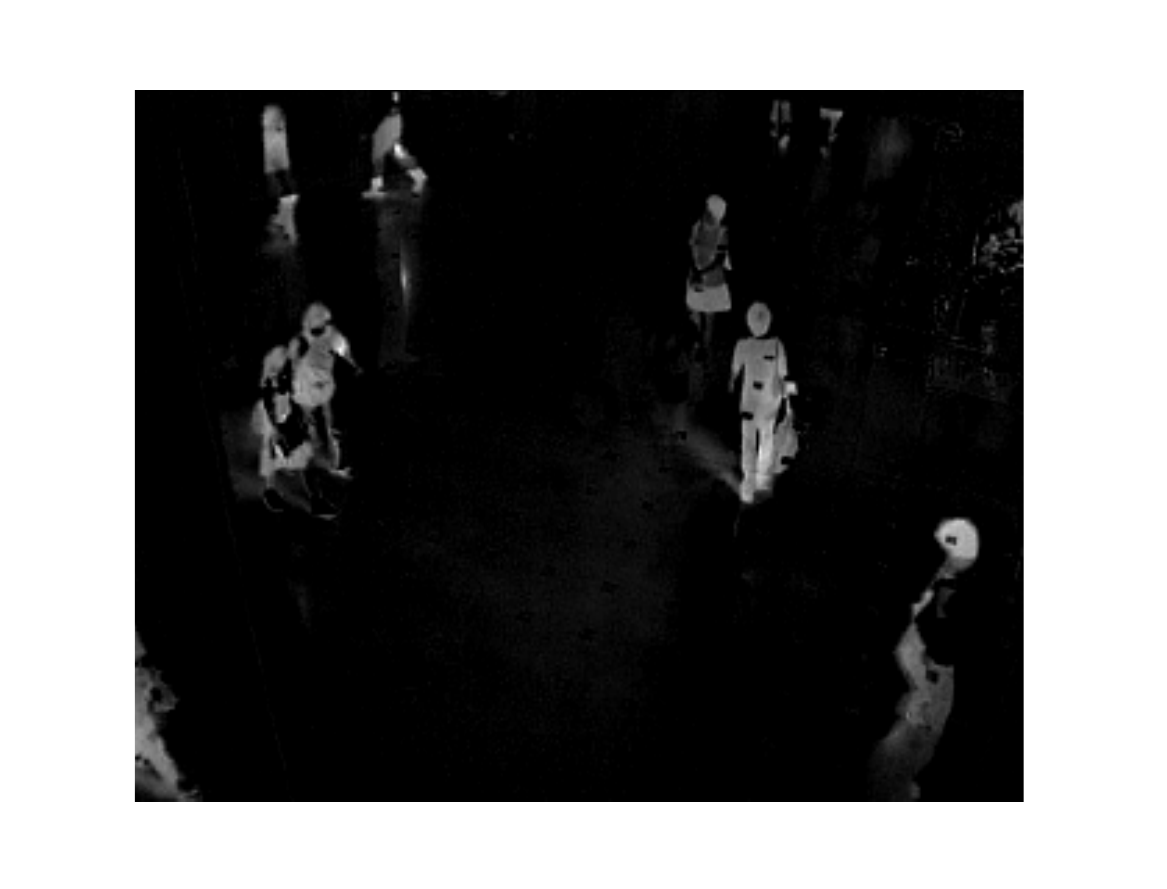} \\
\end{tabular}
\caption{\emph{Background subtraction:} for two sample images from two video datasets.
Low rank approximation (mean added) and foreground detection 
with eigenvectors from  proposed method and  exact 
eigenvectors.}
\label{fig:video1}
\end{center}
\vskip -0.2in
\end{figure*}

\subsection{Data matrices}
Next, we illustrate the performance 
of the proposed method for numerical rank estimation of data matrices.
We consider general data matrices that have low numerical rank from 
publicly available database, 
SuitSparse~\cite{davis2011university}, and a few synthetic sparse random matrices.
For these matrices,
the Gaussian type distribution assumptions
for the data and noise may not  hold. 
 We report additional comparative results in the supplementary.

Table~\ref{tab:1} presents the performance of the Krylov Subspace method, i.e., 
Algorithm~\ref{alg:algo1} for dimension estimation and approximation of the principal
subspace. The synthetic sparse random matrices are of the form 
$\mbf{X}=\mbf{B}\Lambda\mbf{B}^T+\mbf{N}$, where $\mbf{B}$ is a sparse (relevant) data matrix (unit column norm) of size 
$p\times q$ (sparsity $\text{nnz}(\mbf{B})/pq=[0.05,0.1]$), $\Lambda$ is a diagonal matrix 
with the smallest diagonal entry equal to $\lambda_q$ listed in the 4th column.
$\mbf{N}$ is a Gaussian sparse random matrix with $\sigma$ listed in fifth column.
The number of Lanczos steps per iteration (for each $k$) is $m=10$.
The exact dimension $q$ and the estimated dimension $\tilde{q}$ are reported (dimension estimation),
along with the Frobenius norm error$\|\mbf{A}-\mbf{Y}_{\tilde{q}}\mbf{Y}_{\tilde{q}}^T\mbf{A}\|_F$, evaluating
the quality of approximation to the principal subspace. 
The runtime of the algorithm is also reported (computed using \texttt{cputime}
function on an Intel i-5 3.4GHz machine).
For the synthetic examples, we vary the  parameters: size $p$, rank $q$, 
data strength $\lambda_q$ and noise level $\sigma$, and report the results. 
We also consider a few sparse data matrices (also see supplementary). 
We report matrices that have  smaller numerical rank ($q\ll \min(n,p)$)
and a reasonable gap in the spectrum.
 The Krylov subspace  algorithm works well only when there is a 
spectral gap. Otherwise, the interior eigenvalues do not converge. 
For large matrix 'internet', we do not know the exact rank (cannot compute complete decomposition).
We observe that the algorithm performs reasonably well for these  matrices.
The method is also quite inexpensive, particularly for large sparse data matrices.

\subsection{Video Foreground Detection}
In the last experiment, we consider an application of PCA, that of 
background subtraction  in surveillance videos.
Here, PCA is used to separate the foreground information from the  background noise.
We consider 
two videos datasets: ``Lobby in an office building with switching
on/off lights" and ``Shopping center" available from
\url{http://perception.i2r.a-star.edu.sg/bk\_model/bk\_index.html}.
Here we illustrate how the proposed Krylov method
 can be used to obtain an appropriate dimension of the 
 principle subspace (components) to be
used for background subtraction, and use the approximate principal components obtained from the algorithm
in the application~\cite{candes2011robust}.

The Lobby video contains 1546
frames each of size $160\times128$, and
  the data matrix size  is $1546\times20480$.
Second video is from a  shopping mall with 
   1286    frames  each of  resolution  $320\times256$. So,
the   data   matrix   is   of   size   $1286\times81920$.  
This video contains more activities than Lobby video
with many  people moving  in and  out of  the frames throughout.
The performance of the proposed method
for background subtraction of these video data is shown 
in figure \ref{fig:video1}.

Figure \ref{fig:video1}(four images on the left) 
are results on  a randomly selected frame from the Lobby video. The four images correspond to 
the true frame, low rank approximation (after adding back the mean)
and
the background subtracted image
using the eigenvectors obtained from the proposed Krylov method ($m=10,\sigma=0.1$), and using 
the exact 
eigenvectors, respectively. The images were all mean centered and normalized
to have unit norm. The  approximate dimension estimated  was equal 
 to $1$. 
 The matrix has one very large eigenvalue compared to rest, since 
 the video has very little activities (one/two people moving in and out in 
 few frames).

  Figure
\ref{fig:video1}(C) and (D) are the background subtracted images for 
a randomly selected frame from the Shopping Mall video.
The  approximate dimension  estimated by our method was
 $14$.  This video has more activities and the dimension estimated here 
 is higher than for the Lobby data.
 For more details on these datasets and the use of PCA for foreground detection, we refer~\cite{candes2011robust}.
 We observe that, we can achieve good foreground detection using the 
proposed method.
Also note that, our method does not require forming the covariance matrix for PCA
(in the above two video datasets, $p=20480$ and $81920$, respectively), hence requiring
less storage (such dense covariance matrices would not fit in the memory).
Therefore, this example illustrates how the proposed method
 can be used to simultaneously estimate the dimension of the 
principal subspace and use the approximation obtained for the principal subspace in
PCA and robust PCA applications.


 
\newpage
\onecolumn
 \appendix
 \section{Proofs for the derivation}\label{sec:proofs}
 Here we give the proofs that are missing in the main paper.
 \paragraph{Proof of Proposition~\ref{prop:2}.}
 \begin{proof}
  From proposition~\ref{prop:1},  $\mbf{S}_n$ is a $\sqrt{n}$ consistent estimator of 
$\bSigma$, and we can express  $\mbf{S}_n$  as a perturbation
\[
  \mbf{S}_n =\bSigma +\veps \frac{ \mbf{S}_n -\bSigma}{\veps} =  \bSigma+\veps \mbf{E},
\]
where the perturbation of $\bSigma$ is of the order $1/\sqrt{n}$. That is,
$
 \veps  \mbf{E}=O_p\left(\frac{1}{\sqrt{n}}\right).
$
Then,
\begin{eqnarray*}
  \mbf{S}_n \mbf{U}_q\Lambda_q^{-1}&=& (\bSigma+\veps \mbf{E})\mbf{U}_q\Lambda_q^{-1}\\
  &=& \mbf{U}_q +\veps \mbf{E}\mbf{U}_q\Lambda_q^{-1}.
\end{eqnarray*}
Since $\mbf{U}_q $ has orthogonal columns and is non-random, and also for $\Lambda_q^{-1}$
(diagonal matrix with inverse of the top $q$ eigenvalues)
is bounded since $\lambda_q>\veps$, the second term in the above equation should be
$\veps \mbf{E}\mbf{U}_q\Lambda_q^{-1} = O_p(\veps)$.
Then, we have $\mbf{G}_q=\mbf{U}_q+O_p\left(\frac{1}{\sqrt{n}}\right)$,
i.e., $\mbf{G}_q$ is a $\sqrt{n}$ consistent estimator of $\mbf{U}_q$.
See~\cite{anderson1963asymptotic} for further details.

 \end{proof}

 Proof of the corresponding Corollary:
 
  \begin{proof}
   From proposition~\ref{prop:2}, we have  $\mbf{G}_q=\mbf{U}_q+O_p\left(\frac{1}{\sqrt{n}}\right)$. Then,
   \begin{eqnarray*}
    {\mbf{Q}}_G = \mbf{G}_{p-q}\mbf{G}_{p-q}^T &=& \mbf{I}_p - \mbf{G}_{q}\mbf{G}_{q}^T\\
     &=& \mbf{I}_p -\left[\mbf{U}_q+O_p\left(\frac{1}{\sqrt{n}}\right)\right]\left[\mbf{U}_q+O_p\left(\frac{1}{\sqrt{n}}\right)\right]^T\\
     &=& \mbf{I}_p - \mbf{U}_{q}\mbf{U}_{q}^T + O_p\left(\frac{1}{\sqrt{n}}\right)\\
     &=& \mbf{U}_{p-q}\mbf{U}_{p-q}^T + O_p\left(\frac{1}{\sqrt{n}}\right)
   \end{eqnarray*}

  \end{proof}

  \paragraph{Proof of Proposition~\ref{prop:3}.}
 \begin{proof}
  Using the Corollary, we have
  \begin{eqnarray*}
   \tvec({\mbf{Q}}_G(\mbf{S}_n-\sigma\mbf{I}_p){\mbf{Q}}_G)
   &=& \tvec({\mbf{Q}}_G(\mbf{S}_n-\sigma\mbf{I}_p)(\mbf{Q}_G - \mbf{Q}_U))
   + \tvec({\mbf{Q}}_G(\mbf{S}_n-\sigma\mbf{I}_p){\mbf{Q}}_U)\\
   &=& \tvec({\mbf{Q}}_G(\mbf{S}_n-\sigma\mbf{I}_p){\mbf{Q}}_U) + O_p\left(\frac{1}{\sqrt{n}}\right)\\
    &=&  \tvec(({\mbf{Q}}_G-\mbf{Q}_U)(\mbf{S}_n-\sigma\mbf{I}_p){\mbf{Q}}_U) + \tvec({\mbf{Q}}_U(\mbf{S}_n-\sigma\mbf{I}_p){\mbf{Q}}_U) + O_p\left(\frac{1}{\sqrt{n}}\right)\\
     &=& \tvec({\mbf{Q}}_U(\mbf{S}_n-\sigma\mbf{I}_p){\mbf{Q}}_U) + O_p\left(\frac{1}{\sqrt{n}}\right).\\
 \end{eqnarray*}
Thus, $  \tvec({\mbf{Q}}_G(\mbf{S}_n-\sigma\mbf{I}_p){\mbf{Q}}_G)$ has the same asymptotic distribution
as $  \tvec({\mbf{Q}}_U(\mbf{S}_n-\sigma\mbf{I}_p){\mbf{Q}}_U)$.
We know that the bottom $p-q$ eigenvalues of $\bSigma$ are all $\sigma$. Hence we have
$\mbf{Q}_U\bSigma\mbf{Q}_U=\mbf{Q}_U(\sigma\mbf{I}_p)\mbf{Q}_U$.
So,
we have
\begin{eqnarray*}
 \tvec({\mbf{Q}}_G(\mbf{S}_n-\sigma\mbf{I}_p){\mbf{Q}}_G) &=& \tvec({\mbf{Q}}_U(\mbf{S}_n-\bSigma){\mbf{Q}}_U)
+ O_p\left(\frac{1}{\sqrt{n}}\right)\\
 &=& (\mbf{Q}_U \otimes\mbf{Q}_U) \tvec(\mbf{S}_n-\bSigma)  + O_p\left(\frac{1}{\sqrt{n}}\right).
\end{eqnarray*}
Thus, in terms of the distribution, we have from above,
\[
 \sqrt{n}\mathbb{E}\{\tvec({\mbf{Q}}_G(\mbf{S}_n-\sigma\mbf{I}_p){\mbf{Q}}_G)\} = 
  (\mbf{Q}_U \otimes\mbf{Q}_U)  \mathbb{E}\{\sqrt{n}\tvec(\mbf{S}_n-\bSigma) \} = 0
\]
and
\begin{eqnarray*}
 cov\{\sqrt{n}\tvec({\mbf{Q}}_G(\mbf{S}_n-\sigma\mbf{I}_p){\mbf{Q}}_G)\}& = &
  (\mbf{Q}_U \otimes\mbf{Q}_U)  cov\{\sqrt{n}\tvec(\mbf{S}_n-\bSigma) \} (\mbf{Q}_U \otimes\mbf{Q}_U)\\
  & = & (\mbf{Q}_U \otimes\mbf{Q}_U) \bOmega (\mbf{Q}_U \otimes\mbf{Q}_U) .
\end{eqnarray*}
 \end{proof}
  
  \paragraph{Proof of Theorem~\ref{theo:criteria}.}
 \begin{proof}
 A model selection criterion takes the form 
 \[
  IC(k)= L(n,k)- \mathbb{E}(L(n,k)),
 \]
as $n\longrightarrow\infty$, for $L(n,k)\rightarrow_d \chi^2$ distribution~\cite{anderson2003introduction}. In our case, from Lemma~\ref{lemm:1}, we have
\[
 L(n,k) = \sum_{i=1}^{\eta} \mu_i \chi^2_{(1)},
\]
where $\mu_i$ are the eigenvalues of
$\frac{1}{2\sigma^2}(\mbf{Q}_G \otimes\mbf{Q}_G)(\mbf{S}_n\otimes\mbf{S}_n) (\mbf{Q}_G \otimes\mbf{Q}_G)$,
an estimate of $\frac{1}{2\sigma^2}\hat{\bOmega}$, from Proposition~\ref{prop:3},
the asymptotic covariance matrix of $\sqrt{\frac{n}{2\sigma^2}}\tvec({\mbf{Q}}_G(\mbf{S}_n-\sigma\mbf{I}_p){\mbf{Q}}_G)$,
and square of Gaussian is $\chi^2_{(1)}$.
To compute an approximation to the mean of the statistic, we use the following Gamma approximation:
\begin{eqnarray*}
 \sum_{i=1}^{\eta} \mu_i \chi^2_{(1)} &= & \sum_{i=1}^{\eta} \mu_i \Gamma\left(\frac{1}{2},2\right)
  =  \sum_{i=1}^{\eta} \Gamma\left(\frac{1}{2},2\mu_i \right)\\
   &= & \sum_{i=1}^{\eta} \Gamma(\kappa,\theta_i )
    \simeq  \Gamma(K,\Theta )\\
\end{eqnarray*}
where
\[
 \kappa=\frac{1}{2}, \theta_i=2\mu_i, K=\frac{(\sum_i\kappa\theta_i)^2}{\sum_i\theta_i^2\kappa} \text{ and }
 \Theta= \frac{\sum_i\kappa\theta_i}{K}
\]
and the mean of the asymptotic approximation of $L$ is given by  
$\mathbb{E}(L(n,k))=K\Theta$.
Hence, in our case, 
\[
 \mathbb{E}(L(n,k))= \sum_{i=1}^{\eta} \kappa\theta_i= \sum_{i=1}^{\eta} \mu_i =
\sum_{i,j=k+1;i\neq j}^{p}  \frac{\ell_{i}*\ell_{j}}{2\sigma^2},
\]
where $\{\ell_i\}_{i=1}^p$ are the eigenvalues of the sample covariance matrix $\mbf{S}_n$
and the last equality is from the property of Kronecker products as seen in the proof of  Lemma~\ref{lemm:1}.

Note that, asymptotically $\ell_i\rightarrow\sigma$,the noise variance, for $i>q$ as $n\rightarrow\infty$.
Hence, asymptotically
\[
 \mathbb{E}(L(n,k))\rightarrow\eta=\frac{(p-k)(p-k-1)}{2}.
\]
Hence, we use the criterion in~\eqref{eq:crit} for model selection, i.e., for the dimension estimation
of the principal subspace.
\end{proof}

\begin{wrapfigure}[12]{l}{0.4\textwidth}
  \begin{center}
\includegraphics[width=0.3\textwidth,,trim={2cm 2cm 2cm 1.6cm}]{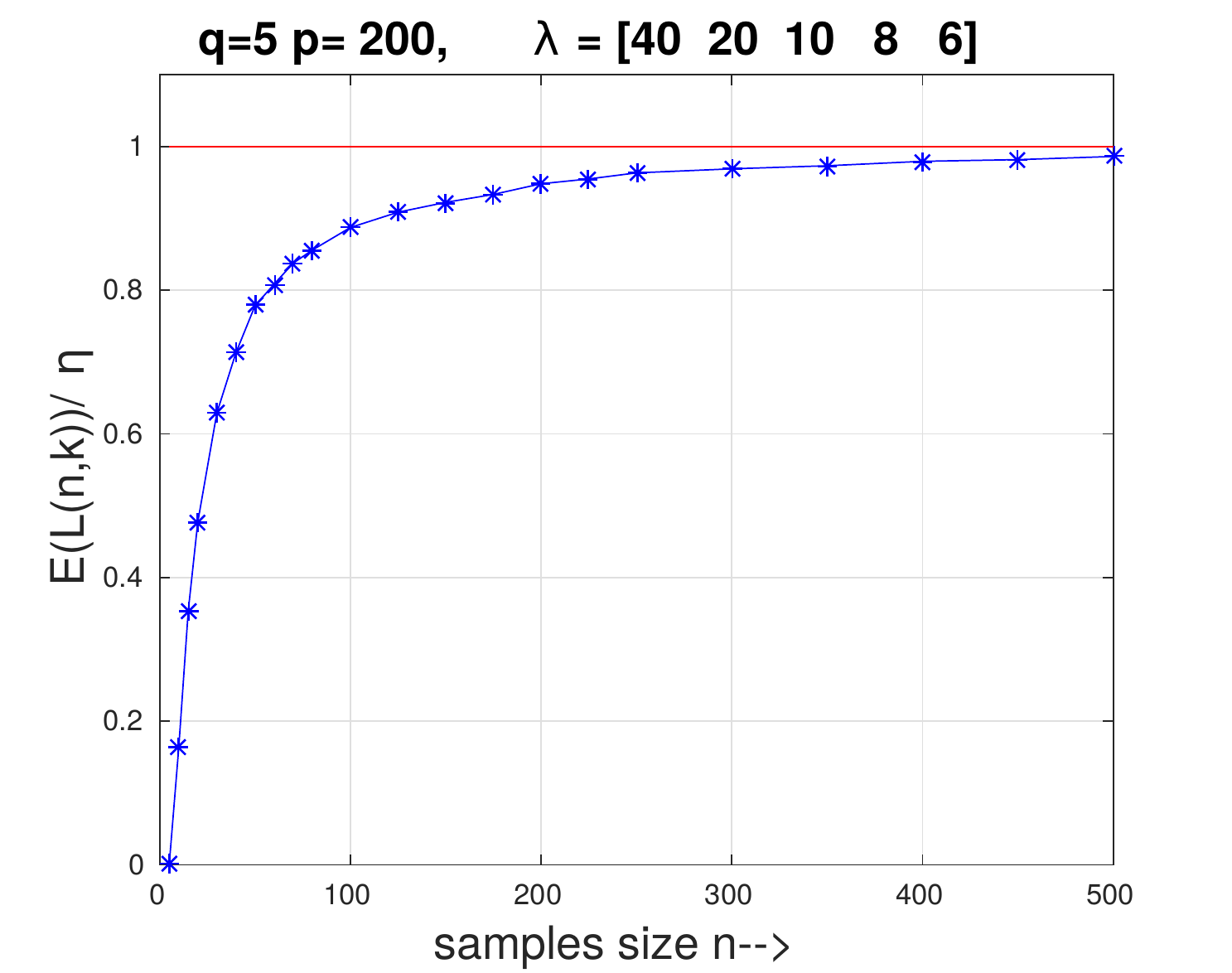}
  \end{center}
  \end{wrapfigure}
  
The figure on the left plots the ratio 
\[
\frac{\mathbb{E}(L(n,q))}{\eta}=\frac{\sum_{i,j=k+1;i\neq j}^{p}  
\frac{\ell_{i}*\ell_{j}}{2\sigma^2}}{\frac{(p-q)(p-q-1)}{2}}
\] 
as a function of 
the number of samples $n$ for a  small simulation with $p=200,q=5$ (similar to the experiment
in Figure~\ref{fig:1}).
The true covariance matrix from which the data is sampled 
has top $q=5$ eigenvalues of magnitude listed in the figure and 
the noise level was $\sigma=1.2$. We plot the average of the ratio over 30 trials.
We note that the mean $\mathbb{E}(L(n,q))$ quickly approaches 
the degree of freedom $\eta$, showing that the quantity $L(n,q)$ indeed has
$\chi^2_\eta$ distribution for large enough $n$. 
Thus, Lemma~\ref{lemm:1} and Theorem~\ref{theo:criteria} hold in practice too.
In section~\ref{sec:expt} of the main paper and below, we present several numerical experiments to illustrate
the performance of the proposed method.

  \section{Proofs for the analysis}\label{sec:proofs2}
  
  \paragraph{Proof of Theorem~\ref{theo:1}}
 \begin{proof}
 In order to prove the strong consistency of 
 \[
  \hat{k}=\arg\min_kIC(k), 
  \]
we first consider that $\hat{k}>k_0$, then
\begin{eqnarray*}
IC(\hat{k})-IC(k_0)& =& \frac{n}{2\sigma^2}\left(\sum_{i=\hat{k}+1}^p(\ell_i-\sigma)^2 - \sum_{i=k_0+1}^p(\ell_i-\sigma)^2\right)
-C_n\left(\frac{(p-\hat{k})(p-\hat{k}-1)}{2}-\frac{(p-k_0)(p-k_0-1)}{2}\right)\\
&=& -\frac{n}{2\sigma^2}\sum_{i=k_0+1}^{\hat{k}}(\ell_i-\sigma)^2-C_n\left(\frac{(\hat{k}-k_0)(\hat{k}+k_0-2p+1)}{2}\right)\\
\frac{IC(\hat{k})-IC(k_0)}{n}&=&-\frac{1}{2\sigma^2}\sum_{i=k_0+1}^{\hat{k}}(\lambda_i-\sigma)^2-
\frac{C_n}{n}\left(\frac{(\hat{k}-k_0)(\hat{k}+k_0-2p+1)}{2}\right)+O\left(\sqrt{\frac{\log\log n}{n}}\right),
\end{eqnarray*}
since $\ell_i=\lambda_i+O\left(\sqrt{\tfrac{\log\log n}{n}}\right)$ from
the law of iterated logarithm~\cite{muirhead2009aspects}.
The last two terms in the RHS of the above equation go to zero as $n$ tends to infinity and $\lambda_i>0$, hence we have
\[
 IC(\hat{k})-IC(k_0)<0 \text{ for all large $n$ a.s.}
\]
Next, for $\hat{k}<k_1$, we have 
\begin{eqnarray*}
IC(\hat{k})-IC(k_1)& =& \frac{n}{2\sigma^2}\left(\sum_{i=\hat{k}+1}^p(\ell_i-\sigma)^2 - \sum_{i=k_1+1}^p(\ell_i-\sigma)^2\right)
-C_n\left(\frac{(p-\hat{k})(p-\hat{k}-1)}{2}-\frac{(p-k_1)(p-k_1-1)}{2}\right)\\
&=& \frac{n}{2\sigma^2}(k_1-\hat{k})O\left({\frac{\log\log n}{n}}\right)-C_n\left(\frac{(\hat{k}-k_1)(\hat{k}+k_1-2p+1)}{2}\right)\\
\frac{IC(\hat{k})-IC(k_1)}{C_n}&=&-\frac{(\hat{k}-k_1)(\hat{k}+k_1-2p+1)}{2}+
\frac{(k_1-\hat{k})}{2\sigma^2}.\frac{O(\log\log n)}{C_n}.
\end{eqnarray*}
Since,  $\ell_i=\lambda_i+O\left(\sqrt{\tfrac{\log\log n}{n}}\right)$ and for all $i>\hat{k},\lambda_i=\sigma$.
Again, the second term in the RHS of the above equation goes to zero due the the property of $C_n$. As, $\hat{k}<k_1$ and 
$\{\hat{k},k_1\}<p$, the first term is always negative. Hence, we again have 
\[
 IC(\hat{k})-IC(k_1)<0 \text{ for all large $n$ a.s.}
\]
\end{proof}

  \paragraph{Proof of Corollary~\ref{corr:2}}

  \begin{proof}
  For the eigenvalues $\theta_i$ computed in  Algorithm~\ref{alg:algo1}, we have from Lemma~\ref{lemm:2},
  $$\ell_i-\eps\ell_{k+1}\leq \theta_i\leq \ell_i, \: i=1,\ldots,k.$$
  Hence, we have \[
  IC(k)\leq \frac{n}{2\sigma^2}\left(\|\mbf{S}_n-\sigma\mbf{I}_p\|_F^2-\sum_{i=1}^k(\ell_i-\eps \ell_{k+1} -\sigma)^2 \right)
   -C_n\frac{(p-k)(p-k-1)}{2}.
   \]
   For the first case when $\hat{k}>k_0$, ignoring the terms that go to zero asymptotically, we will have:
  \begin{eqnarray*}
    \frac{IC(\hat{k})-IC(k_0)}{n}&\leq &\frac{1}{2\sigma^2}\left(\sum_{i=1}^{k_0}(\ell_i-\eps \ell_{k_0+1} -\sigma)^2-
    \sum_{i=1}^{\hat{k}}(\ell_i-\eps \ell_{\hat{k}+1} -\sigma)^2\right)\\
    &=&\frac{1}{2\sigma^2}\left(\sum_{i=1}^{k_0}\left((\ell_i-\eps \ell_{k_0+1} -\sigma)^2-(\ell_i-\eps \ell_{\hat{k}+1} -\sigma)^2\right)
    -    \sum_{i=k_0+1}^{\hat{k}}(\ell_i-\eps \ell_{\hat{k}+1} -\sigma)^2\right).
  \end{eqnarray*}
For $\eps<1$, note that both terms in  RHS is always negative since $\ell_{k_0+1}> \ell_{\hat{k}+1}$. 
Hence $IC(\hat{k})-IC(k_0)<0 $ for eigenvalues computed by the Krylov method.

Next, for the case $\hat{k}<k_1$, the  term in $\tfrac{IC(\hat{k})-IC(k_1)}{C_n}$ which is
neither negative nor goes to zero is
  \begin{eqnarray*}
 \frac{IC(\hat{k})-IC(k_1)}{C_n}&\leq &\frac{n}{2\sigma^2C_n}\left(\sum_{i=\hat{k}+1}^{k_1}(\ell_i-\eps \ell_{k_1+1} -\sigma)^2\right)\\
 &=& \frac{n}{2\sigma^2C_n}(k_1-\hat{k})\left((1-\eps)(\sigma+O\left(\sqrt{\tfrac{\log\log n}{n}}\right))-\sigma \right)^2.
 \end{eqnarray*}
Hence, if we replace $\sigma$ in the algorithm by $(1-\eps)\sigma$, this term goes to zero and 
we will have $IC(\hat{k})-IC(k_1)<0 $.

\end{proof}
\section{Additional Numerical Results}

 In section~\ref{sec:expt} of the main paper, we presented several numerical experiments to illustrate
the performance of the proposed method in applications. Here, we present few additional experimental results.

 \begin{figure*}[tb!]
 \begin{center}
\includegraphics[width=0.32\textwidth]{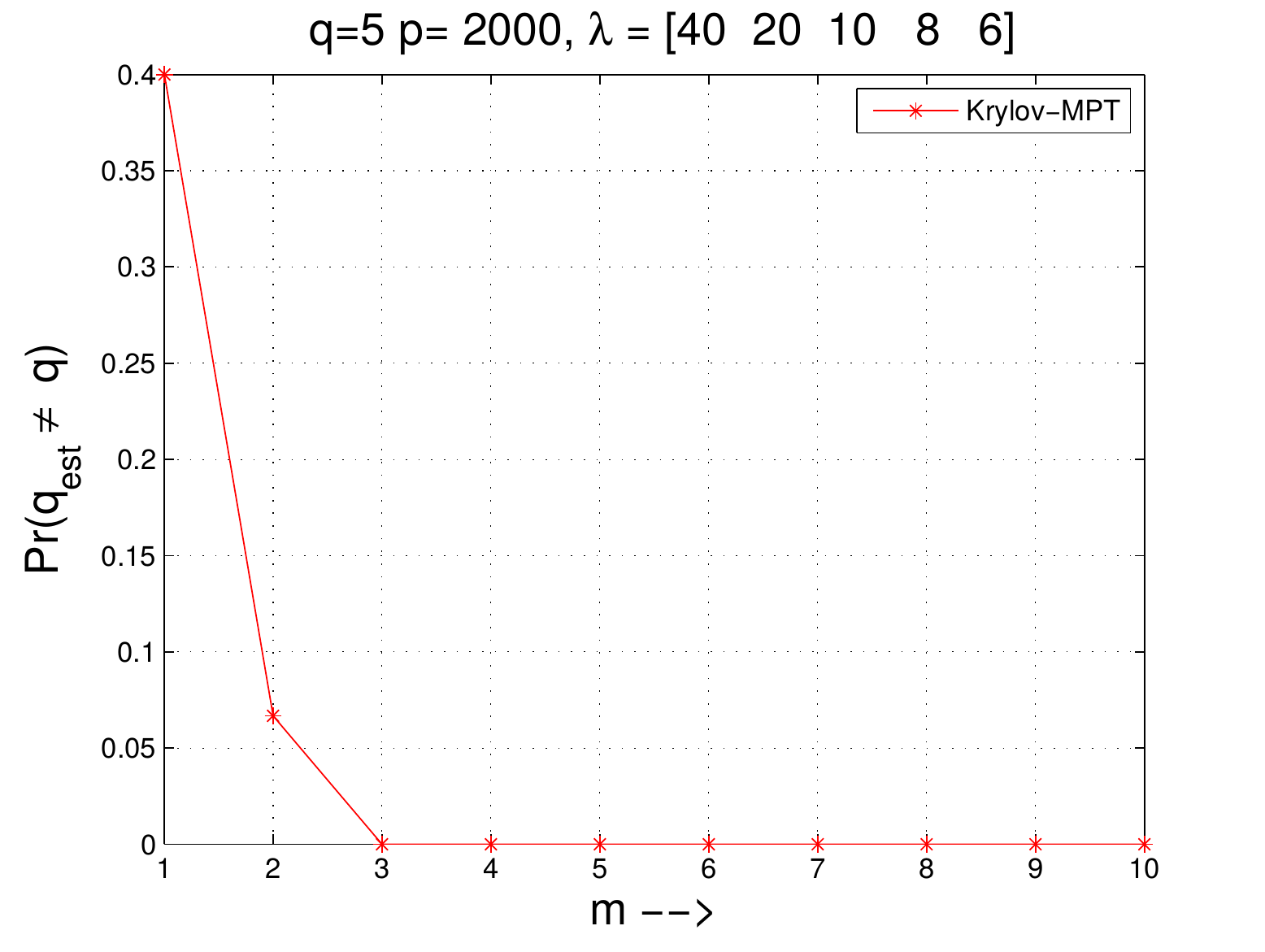}
\includegraphics[width=0.32\textwidth]{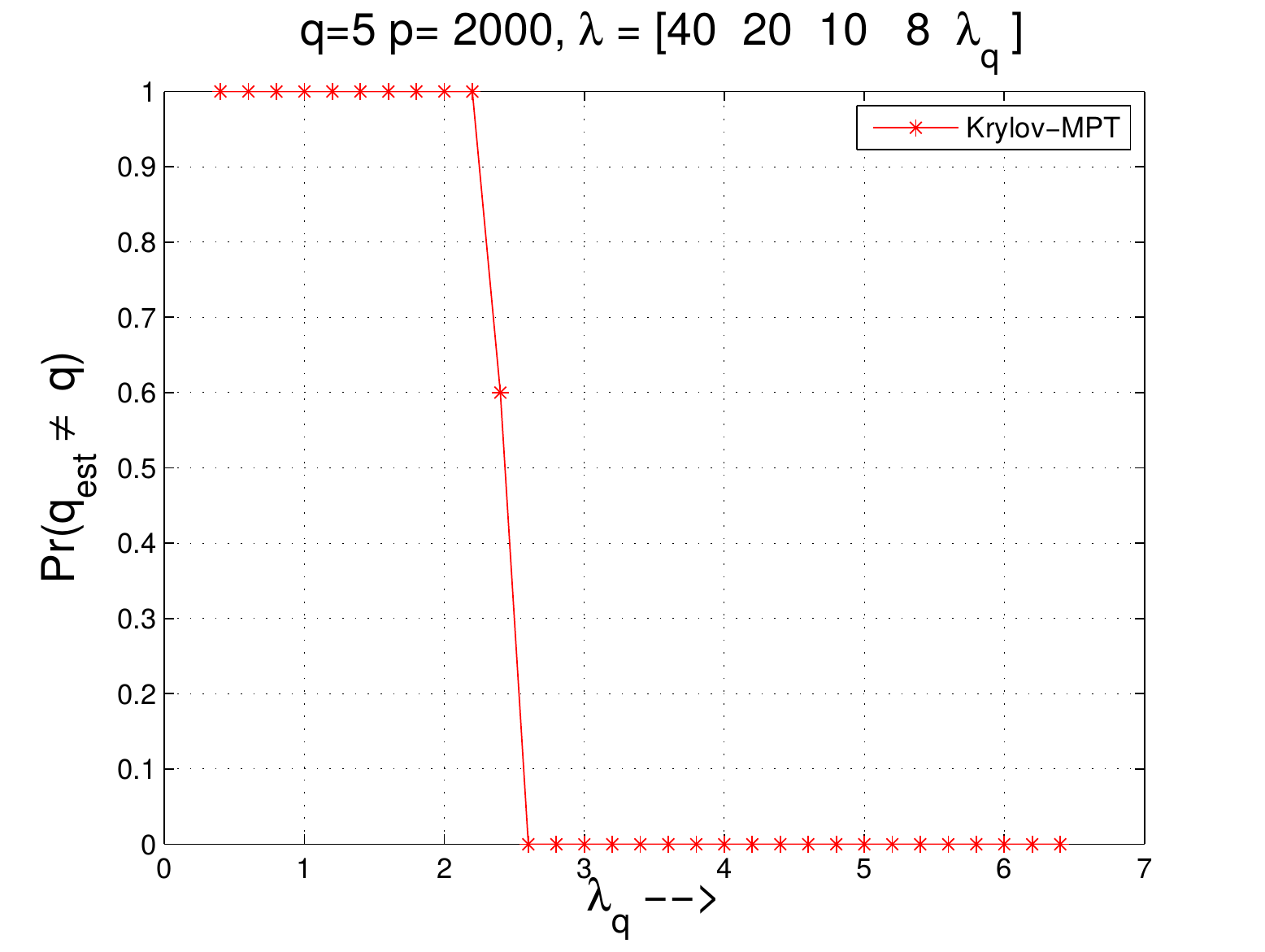}
\includegraphics[width=0.32\textwidth]{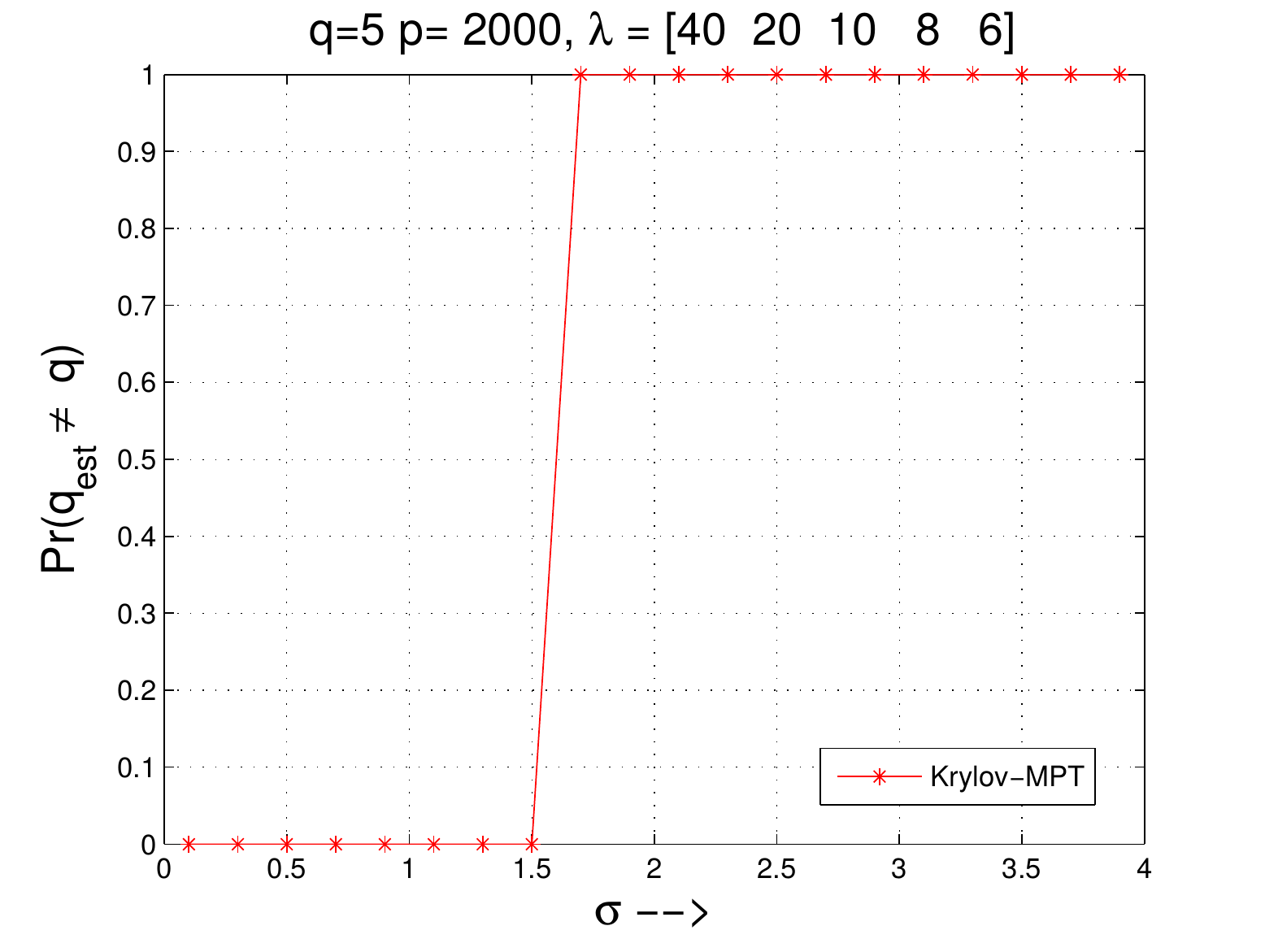}
\vskip -0.1in
 \caption{Signal detection using the Krylov method: Detection as a function of: 
number of Lanczos steps~$m$ (left),  signal strength 
 ($\ell_q$ eigenvalue), and (right) the noise level $\sigma$.}\label{fig:2}
 \end{center}
\end{figure*}
 
\paragraph{Krylov subspace method:}
In the the main paper, for the number of signal detection experiments, we used the exact eigenvalues 
 of the covariance matrices (computed using \texttt{eig} function in Matlab) for the dimension 
estimation using the three compared methods (MDL and RMT require all of the eigenvalues).
Here, we illustrate how the proposed Krylov subspace based algorithm~\ref{alg:algo1} performs
for the dimension estimation. We consider the same signal detection problem as above
(same Gaussian model as Fig.~1). 
The first plot in figure~\ref{fig:2} give the performance of the algorithm 
as a function of the number of Lanczos steps $m$. The parameters were chosen to be 
$p=2000,n=2500,\sigma=1.1$.
We know the relation between the error $\eps$ in the eigenvalue estimation
by the Lanczos algorithm and the number of Lanczos steps $m$ from Lemma~\ref{lemm:2}.
Hence, increasing $m$ is equivalent to decreasing $\eps$.
We see that for a very few Lanczos steps $m\geq4$, we get accurate results.
This is because, it is well-known that
the top eigenvalues computed by Lanczos algorithm converges fast~\cite{Saad-book3}.
This superior performance  of the Lanczos algorithm was observed in~\cite{xu1994fast2} 
as well for a similar Gaussian signal detection model.

In the second and third plots, we plot the  performance of the Krylov subspace method for signal detection
as a function of the signal strength (magnitude of $\lambda_q$ in the middle) and the
noise level $\sigma$ (right), 
with $p=2000, n=2500,m=5$. 
We observed that, our Algorithm~\ref{alg:algo1}, for $m\geq4$, 
performs very well and replicates the results we obtained by the proposed method 
with exact eigenvalues of the sample covariance matrix
 (reported in Figure~\ref{fig:1}).

 \begin{figure*}[tb!]
 \begin{center}
\includegraphics[width=0.24\textwidth]{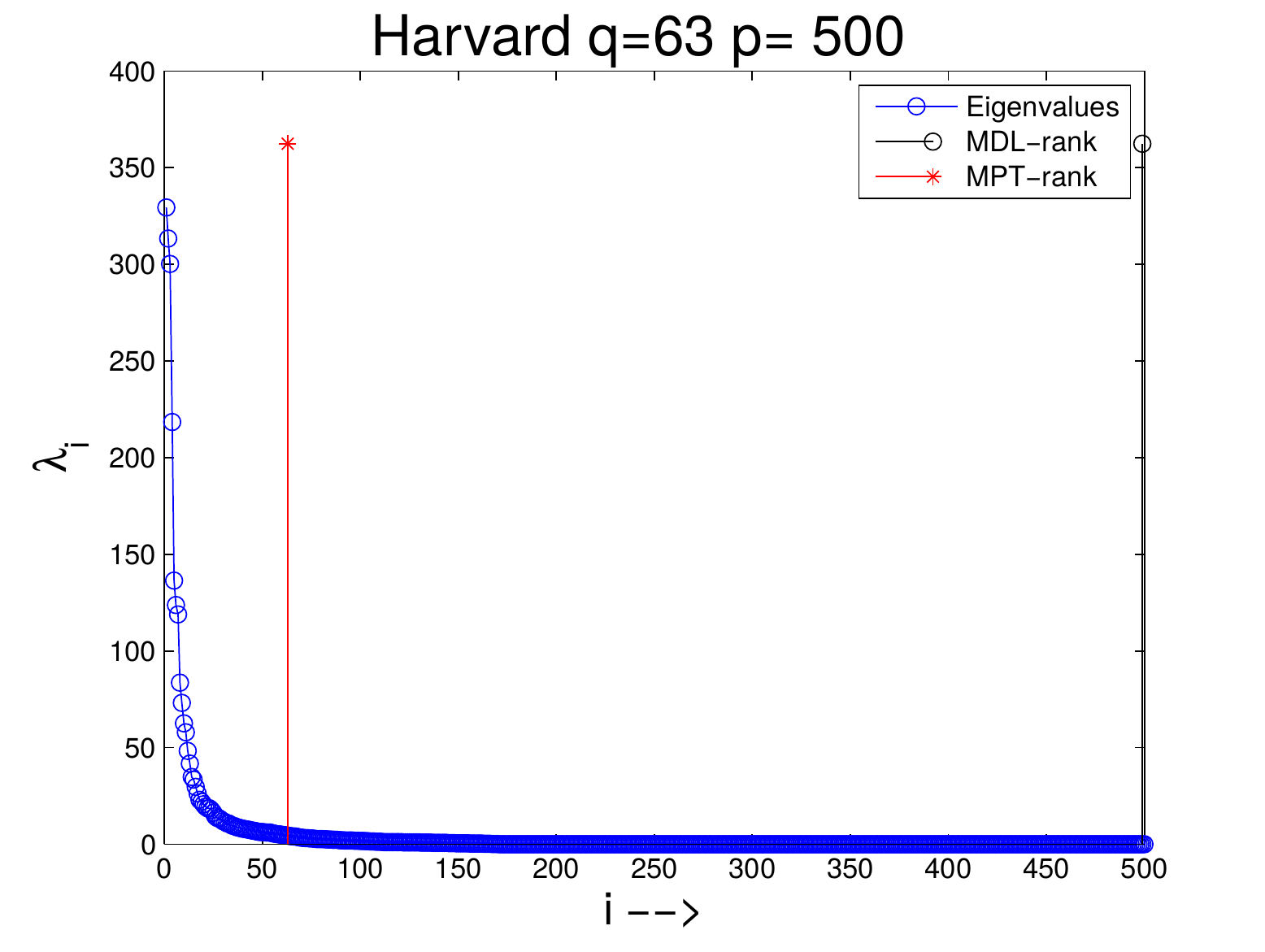}
\includegraphics[width=0.24\textwidth]{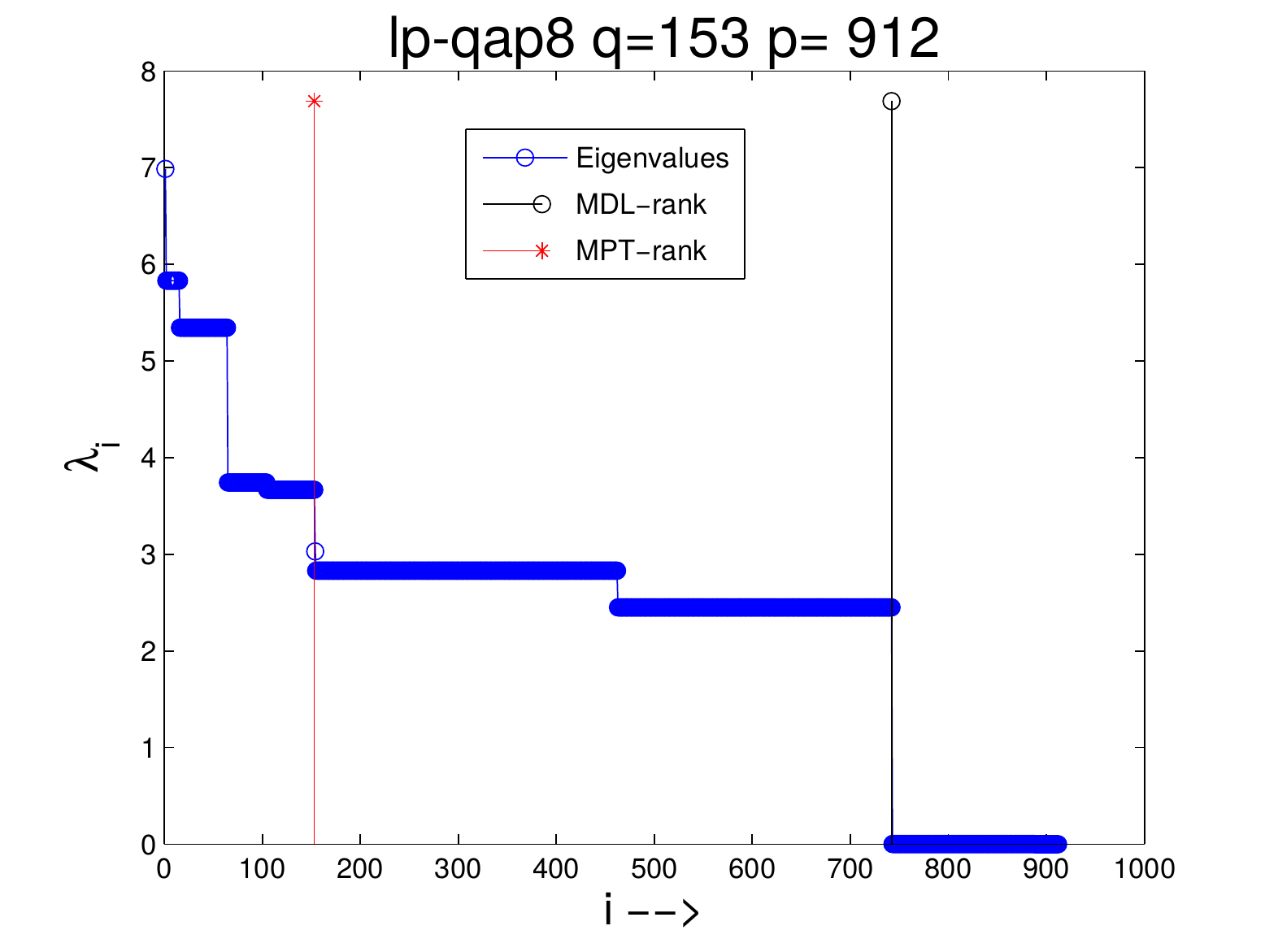}
\includegraphics[width=0.24\textwidth]{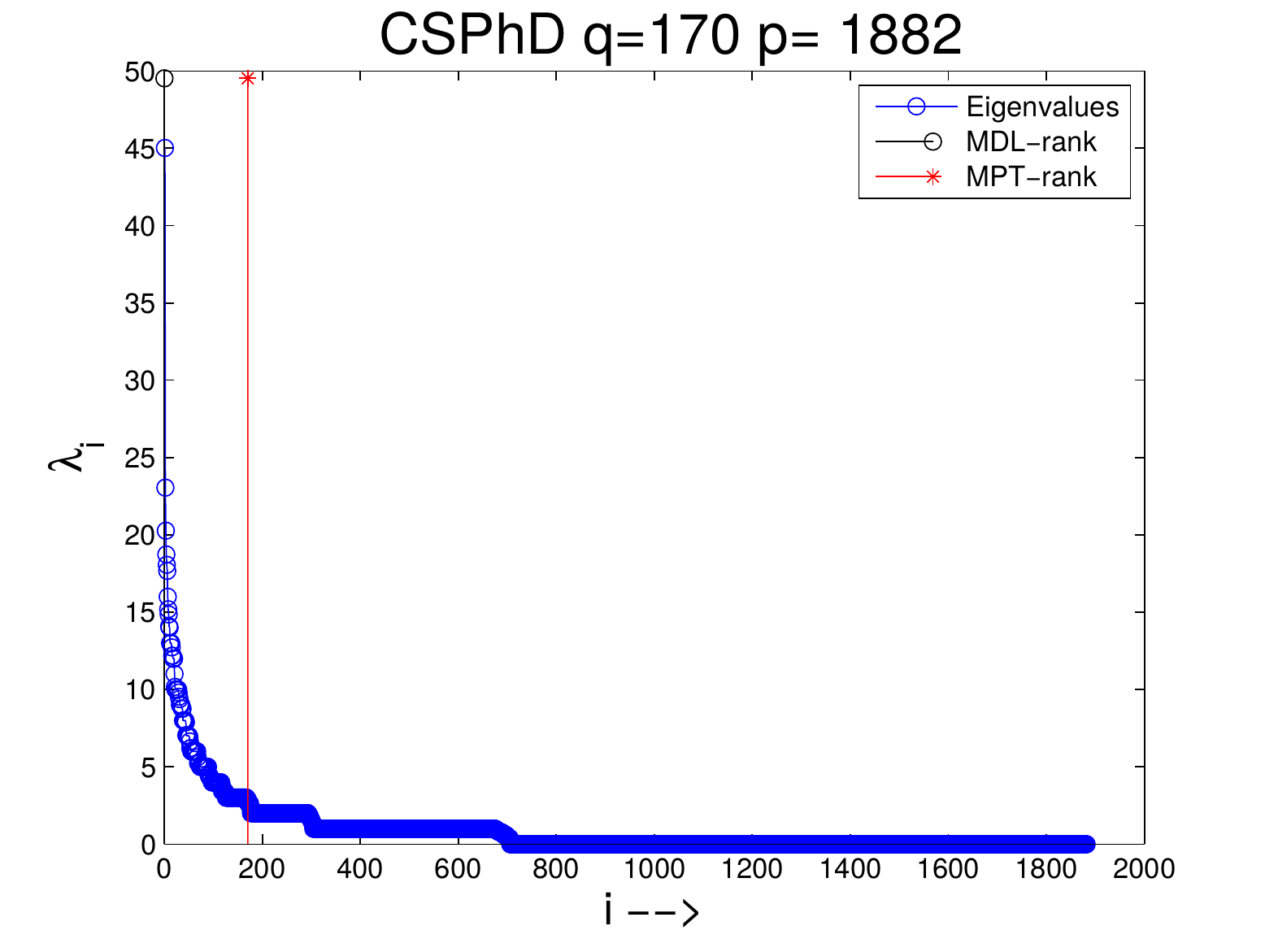}
\includegraphics[width=0.24\textwidth]{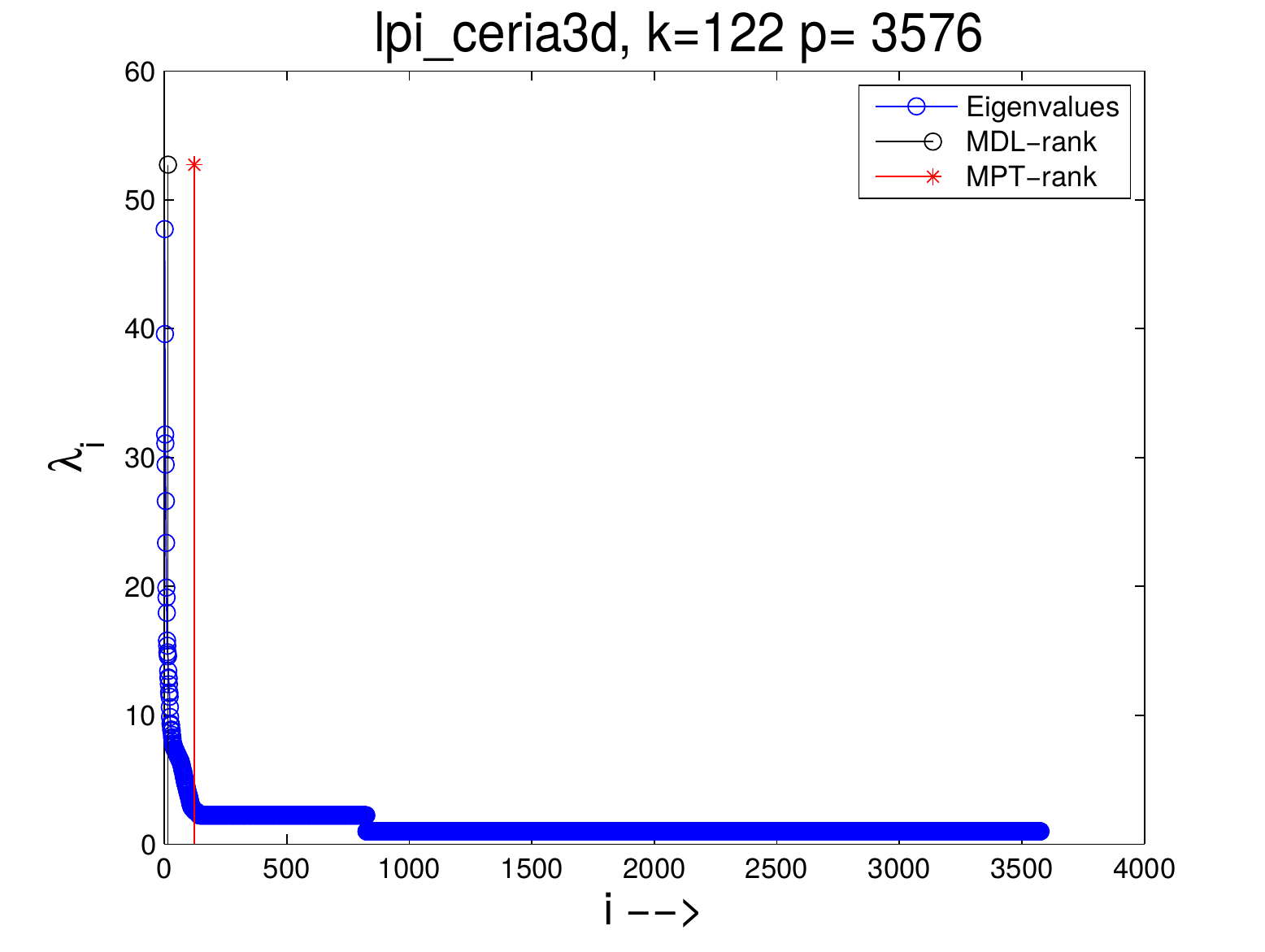}
\includegraphics[width=0.24\textwidth]{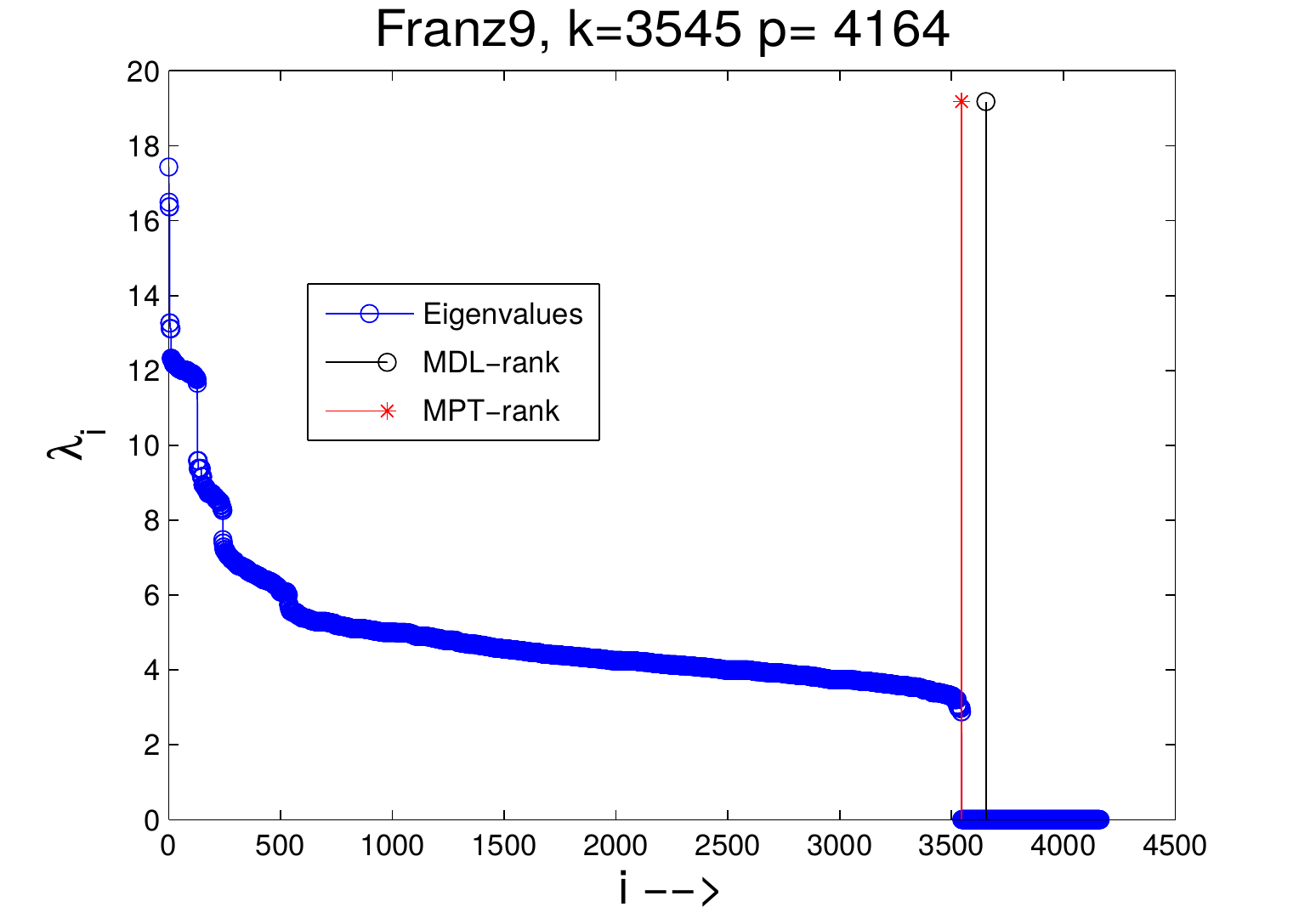}
\includegraphics[width=0.24\textwidth]{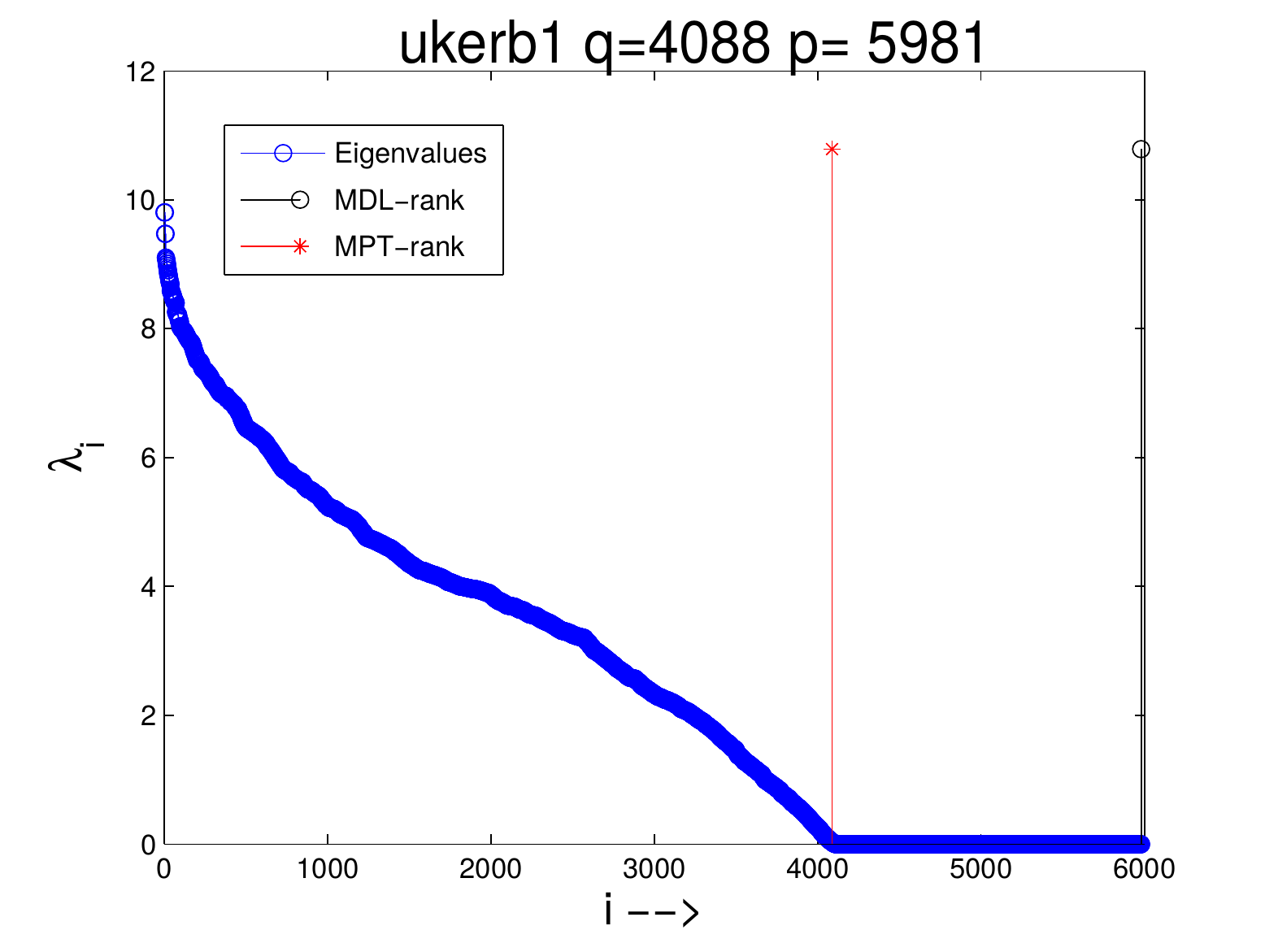}
\includegraphics[width=0.24\textwidth]{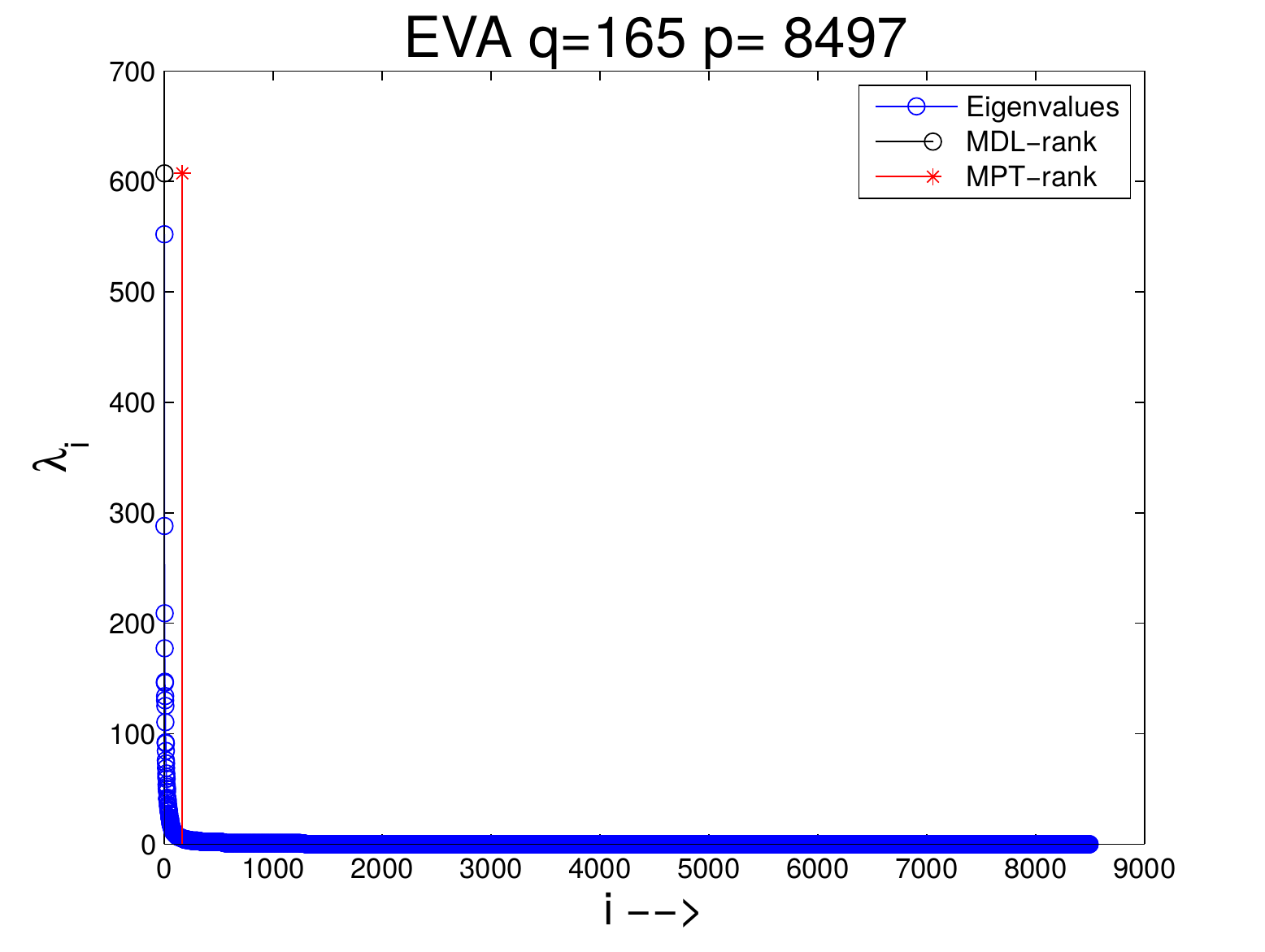}
\includegraphics[width=0.24\textwidth]{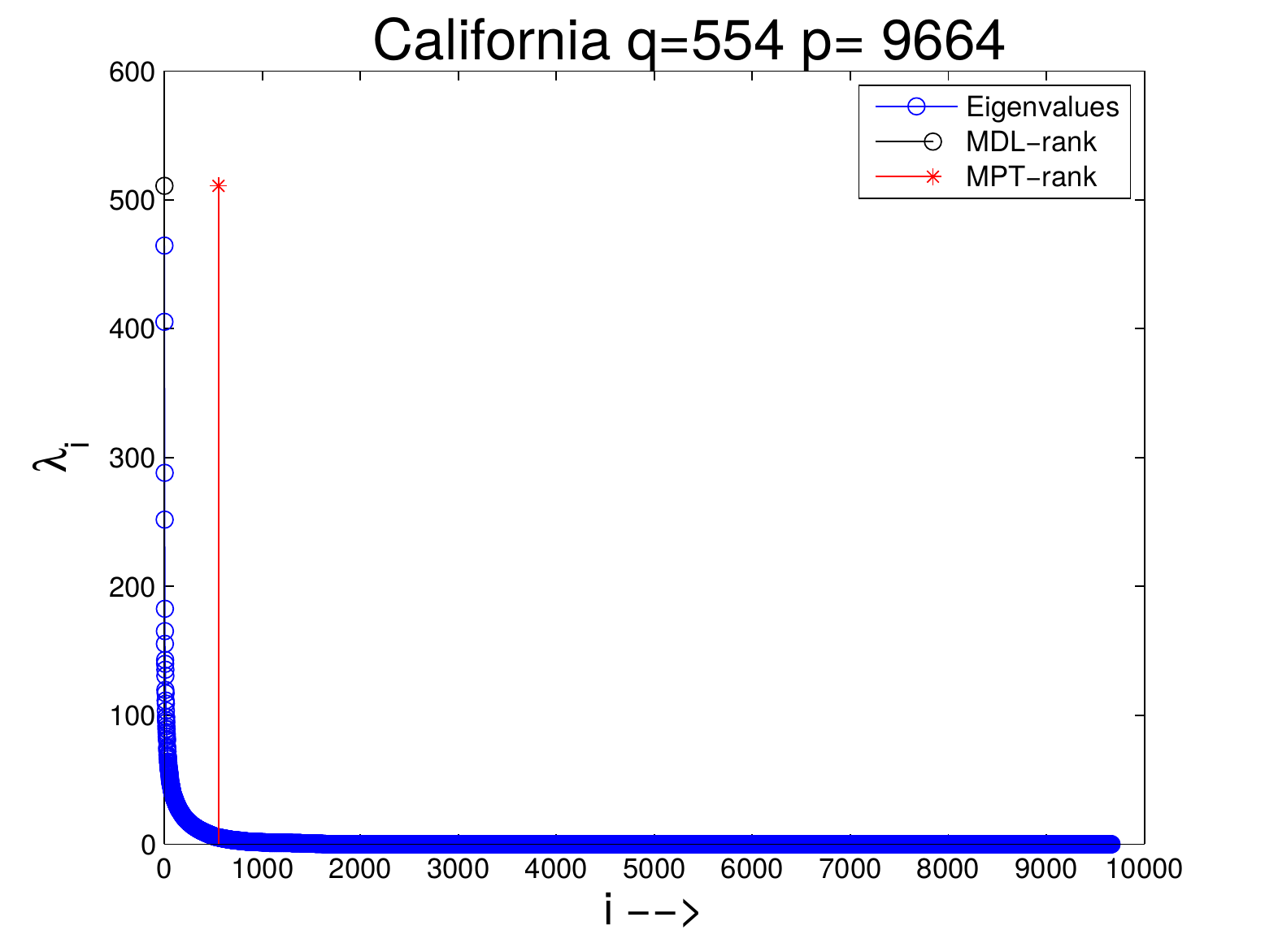}
\includegraphics[width=0.24\textwidth]{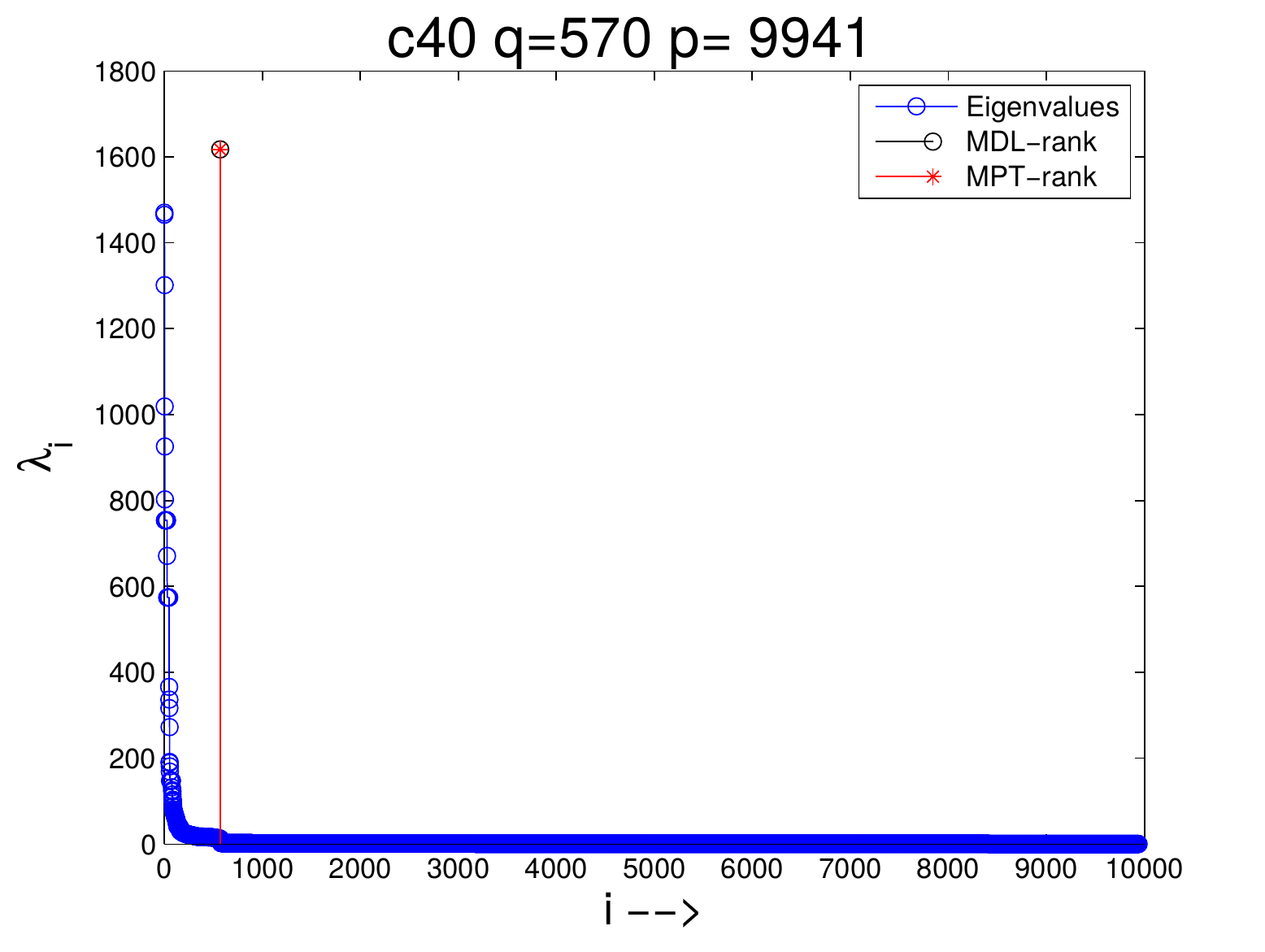}
\includegraphics[width=0.24\textwidth]{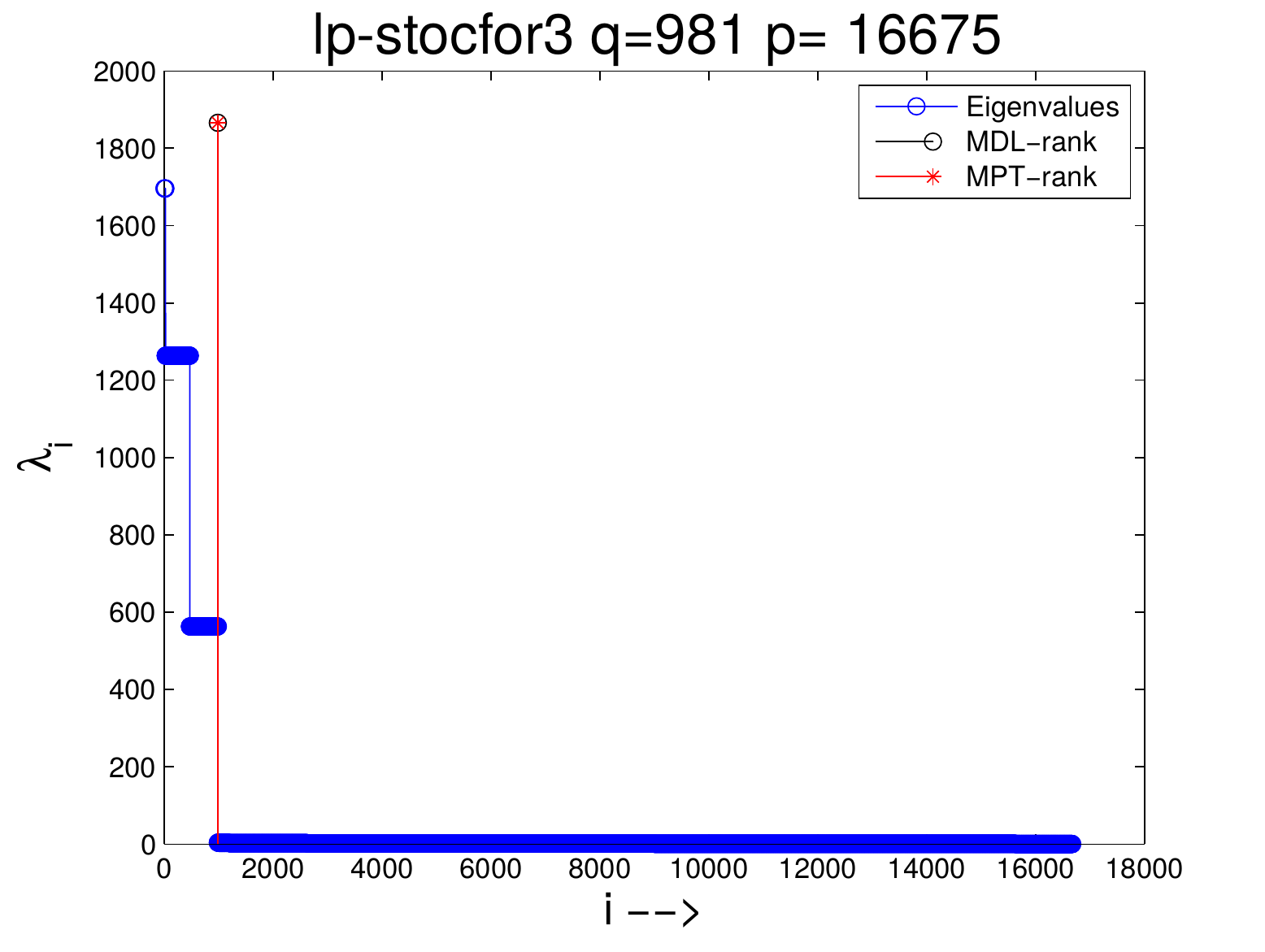}
\includegraphics[width=0.24\textwidth]{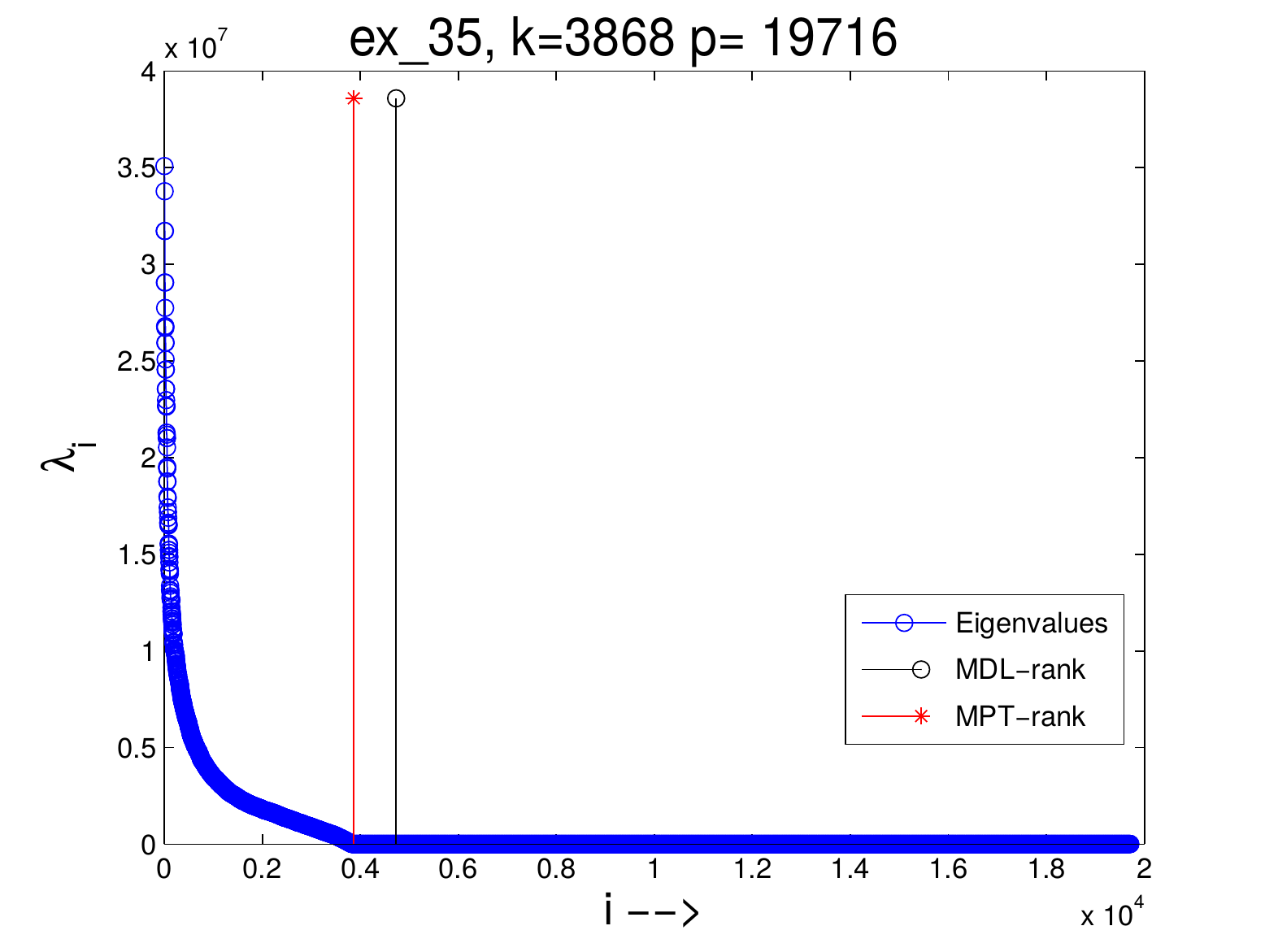}
\includegraphics[width=0.24\textwidth]{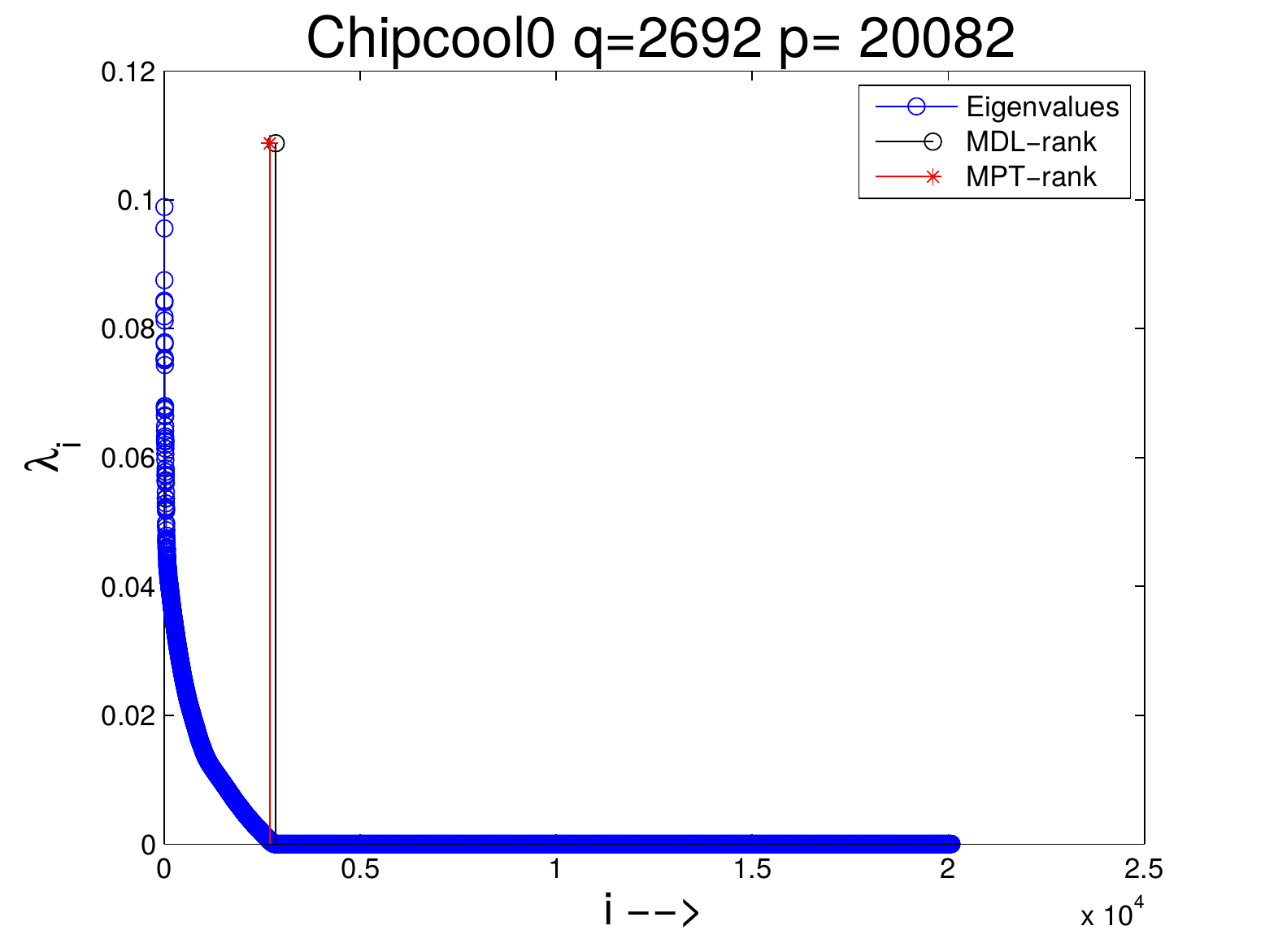}
\vskip -0.1in
 \caption{Numerical rank estimation of data matrices 
 by the proposed method (MPT) and MDL, along with the actual spectrum.}\label{fig:3}
 \end{center}
 \vskip -0.1in
\end{figure*}

\paragraph{Data Matrices:}
In Table~\ref{tab:1} of the main paper, we saw the performance of the proposed algorithm
on few sparse data matrices. The following results give us more insight into the method's performance.
Figure~\ref{fig:3} presents the spectrum of twelve  matrices obtained from the SuiteSparse database with low numerical rank and gap in the spectrum,
along with the 
rank estimated by the the proposed method (MPT) as a red (star) line 
and MDL in black (circle).
We chose $C_n=\log n$ in all cases and  $\sigma=1$ (except chipcool0 where $\sigma=0.01$ was chosen). 
The matrix name, size $p$ and the actual numerical rank $q$ 
(based on the gap) are given in the title of each plot. 
We note that the proposed method gives god solution for almost all examples
except one case 
(lp-qap8, second plot,  the method chooses a different gap in the spectrum for $\sigma=1$).
The MDL method fails  in a few examples and is slightly 
off in a couple more examples.
The matrix lpiceria3d (fourth plot/1st row 1st column) is interesting because the matrix 
has two distinct eigen-gaps close to zero. Our method 
selects the first one. These set of experiments show that 
the proposed method performs very well (determines the rank based on the spectral gap) for
 general data matrices too, where the distribution assumptions do not hold.

\end{document}